%
%
%

\documentclass[graybox]{svmult}


\usepackage{mathptmx}       
\usepackage{helvet}         
\usepackage{courier}        
\usepackage{type1cm}        
%
\usepackage{makeidx}         
\usepackage{graphicx}        
\usepackage{multicol}        
\usepackage[bottom]{footmisc}

%
\usepackage{shuffle}
\usepackage{amssymb,amsmath}
\newcommand{\C}{\mathbb{C}}
\newcommand{\g}{\mathfrak{g}}
\newcommand{\G}{\mathcal{G}}
\newcommand{\bk}{{\bf k}}
\newcommand{\bl}{{\bf l}}
\newcommand{\bm}{{\bf m}}
\newcommand{\R}{\mathbb{R}}
\newcommand{\T}{\mathbb{T}}
\newcommand{\W}{\mathcal{W}}
\newcommand{\Z}{\mathbb{Z}}
\newcommand{\zero}{\mathbf{0}}
\newcommand{\uno}{{\, 1\!\!\! 1\,}}
\newcommand{\sh}{\shuffle}


\makeindex             


\begin{document}

\title*{Averaging and computing normal forms with word series algorithms}
\author{A. Murua and J.M. Sanz-Serna}
\institute{A. Murua \at Konputazio Zientziak eta A.\ A.\  Saila, Informatika
 Fakultatea, UPV/EHU, E--20018 Donostia--San Sebasti\'{a}n,  Spain. \email{Ander.Murua@ehu.es}
\and J.M. Sanz-Serna \at Departamento de Matem\'aticas, Universidad Carlos III de Madrid, E--28911 Legan\'es (Madrid),  Spain.
 \email{jmsanzserna@gmail.com}}
%
%
\maketitle

\abstract*{In the first part of the present work  we consider periodically or quasiperiodically forced systems of the form $(d/dt)x = \epsilon f(x,t \omega )$, where $\epsilon\ll 1$, $\omega\in\R^d$ is a nonresonant vector of frequencies and $f(x,\theta)$ is $2\pi$-periodic in each of the $d$ components of $\theta$ (i.e.\ $\theta\in\T^d$). We describe in detail a technique for explicitly finding a change of variables $x = u(X,\theta;\epsilon)$ and an (autonomous) averaged system $(d/dt) X = \epsilon F(X;\epsilon)$ so that, formally, the solutions of the given system may be expressed in terms of the solutions of the averaged system by means of the relation $x(t) = u(X(t),t\omega;\epsilon)$. Here $u$ and $F$ are found as series whose terms consist of vector-valued maps weighted by suitable scalar coefficients. The maps are easily written down by combining the Fourier coefficients of $f$ and the coefficients are found with the help of simple recursions. Furthermore these coefficients are {\em universal} in the sense that they do not depend on the particular $f$ under consideration.
In the second part of the contribution, we study  problems of the form $(d/dt) x = g(x)+f(x)$, where one knows how to integrate the \lq unperturbed\rq\ problem $(d/dt)x = g(x)$ and $f$ is a perturbation satisfying appropriate hypotheses. It is shown how to explicitly rewrite the system in the \lq normal form\rq\ $(d/dt) x = \bar g(x)+\bar f(x)$, where
$\bar g$ and $\bar f$ are {\em commuting} vector fields and the flow of $(d/dt) x = \bar g(x)$ is conjugate to that of the unperturbed $(d/dt)x = g(x)$. In Hamiltonian problems the normal form directly leads to the explicit construction of formal invariants of motion. Again, $\bar g$, $\bar f$ and the invariants are written as series consisting of  known vector-valued maps and universal scalar coefficients that may be found recursively.}

\abstract{In the first part of the present work  we consider periodically or quasiperiodically forced systems of the form $(d/dt)x = \epsilon f(x,t \omega )$, where $\epsilon\ll 1$, $\omega\in\R^d$ is a nonresonant vector of frequencies and $f(x,\theta)$ is $2\pi$-periodic in each of the $d$ components of $\theta$ (i.e.\ $\theta\in\T^d$). We describe in detail a technique for explicitly finding a change of variables $x = u(X,\theta;\epsilon)$ and an (autonomous) averaged system $(d/dt) X = \epsilon F(X;\epsilon)$ so that, formally, the solutions of the given system may be expressed in terms of the solutions of the averaged system by means of the relation $x(t) = u(X(t),t\omega;\epsilon)$. Here $u$ and $F$ are found as series whose terms consist of vector-valued maps weighted by suitable scalar coefficients. The maps are easily written down by combining the Fourier coefficients of $f$ and the coefficients are found with the help of simple recursions. Furthermore these coefficients are {\em universal} in the sense that they do not depend on the particular $f$ under consideration.
In the second part of the contribution, we study  problems of the form $(d/dt) x = g(x)+f(x)$, where one knows how to integrate the \lq unperturbed\rq\ problem $(d/dt)x = g(x)$ and $f$ is a perturbation satisfying appropriate hypotheses. It is shown how to explicitly rewrite the system in the \lq normal form\rq\ $(d/dt) x = \bar g(x)+\bar f(x)$, where
$\bar g$ and $\bar f$ are {\em commuting} vector fields and the flow of $(d/dt) x = \bar g(x)$ is conjugate to that of the unperturbed $(d/dt)x = g(x)$. In Hamiltonian problems the normal form directly leads to the explicit construction of formal invariants of motion. Again, $\bar g$, $\bar f$ and the invariants are written as series consisting of  known vector-valued maps and universal scalar coefficients that may be found recursively.}

\section{Introduction}
In this article we illustrate how to use word series to manipulate systems of differential equations. Specifically we deal with the questions of high-order averaging of periodically or quasiperiodically forced systems and reduction to normal form of perturbations of integrable systems. The manipulations require operations with complex numbers rather than with vector fields.

Word series are patterned after B-series \cite{b}, a well-known tool to analyse numerical integrators  (see \cite{china} for a summary of the uses of formal series in the numerical analysis of differential equations). While B-series are parameterized by rooted trees, word series \cite{words}   possess one term for each word $w$ that may be composed with the letters of a suitable alphabet $A$ \cite{reu}.
Each term $\delta_wf_w$ of a word series is the product of a scalar coefficient $\delta_w$ and a vector field $f_w$. The vector fields $f_w$ may be immediately constructed and depend on the differential system under consideration. The coefficients $\delta_w$ are {\em universal}, in the sense that they do not change with the particular differential system being studied. Series of {\em differential operators} parameterized by words  (Chen-Fliess series) are very common, e.g.\ in control theory \cite{nuevo} and dynamical systems
\cite{fm} and have also been used in numerical analysis (see \cite{anderfocm} among others). As discussed in \cite{words}, word series are mathematically equivalent to Chen-Fliess series, but being series of functions they are handled in a way very similar to the way numerical analysts handle B-series.
In the present work, as in \cite{part1}, \cite{part2}, \cite{orlando}, \cite{juanluis}, the formal series techniques originally introduced to analyze numerical integrators are applied to the study of dynamical systems.

The structure of this article is as follows. The use of word series is briefly reviewed in Section 2. Section 3 addresses the problem of averaging
periodically or quasiperiodically forced systems. We find a change of variables that formally reduces the system to time-independent (averaged) form. Both the change of variables and the averaged system are expressed by means of word series with universal coefficients that may be computed by means of simple recursions. The averaged system obtained in this way has favourable geometric properties. It is equivariant with respect to arbitrary changes of variables, i.e., the operations of changing variables and averaging commute. In addition averaging a Hamiltonian/divergence free/\dots\ system results in a system that is also Hamiltonian/divergence free/\dots\ Sections 4 and 5 are devoted to the reduction to normal form of  general classes of perturbed problems.

Let us discuss the relation between this article and our earlier contributions. The problems envisaged here have been considered in \cite{part2}. However the treatment in \cite{part2} makes heavy use of B-series; word series results are derived, by means of the Hopf algebra techniques of \cite{anderfocm}, as a byproduct of B-series results. Here the circuitous derivations of \cite{part2} are avoided by working throughout with word series, without any reference to B-series. One of our aims when writing this article has been to provide potential users of word series techniques in application problems with a more focused, brief and clear guide than \cite{part2} provides. In addition, the class of perturbed problems considered in Sections 4 and 5 below is much wider than that considered in \cite{part2}. In \cite{juanluis} we have recently addressed the reduction of perturbed problems to normal forms. The treatment in \cite{juanluis} is based on the application of successive changes of variables; here the normal form is directly obtained in the originally given variables. An application of word series techniques to stochastic problems is provided in \cite{alfonso}. The article \cite{guirao} presents an application of the high-order averaging described here to a problem arising in vibrational resonance.

All the developments in the article use formal series of smooth maps. To streamline the presentation the words \lq formal\rq\ and \lq smooth\rq\ are often omitted. By truncating the formal expansions obtained in this article it is possible to obtain nonformal results, as in \cite{orlando} or \cite{part3}, but we shall not be concerned with such a task.

\section{Word series}

We begin by presenting the most important rules for handling word series. For proofs and additional properties of word series, the reader is referred to \cite{words}.

\subsection{Defining word series}

Assume that $A$ is a finite or infinite countable set of indices (the alphabet) and that for each element (letter) $\ell\in A$, $f_\ell(y)$ is a map $f_\ell:\C^d\rightarrow \C^d$. Associated with each nonempty word $\ell_1\cdots\ell_n$ constructed with letters from the alphabet, there is a {\em word basis function}. These are defined recursively by
$$
f_{\ell_1\cdots\ell_n}(y) = f^\prime_{\ell_2\cdots\ell_n}(y) f_{\ell_1}(y), \quad n >1,
$$
where $f^\prime_{\ell_2\cdots\ell_n}(y)$ is the Jacobian matrix of $f_{\ell_2\cdots\ell_n}(y)$. For the empty word, the corresponding basis function is the identity map $y\mapsto y$. The set of all words (including the empty word $\emptyset$) will be denoted by $\W$ and the symbol $\C^\W$ will  be used to refer to the vector space of all mappings $\delta:\W\rightarrow \C$. For $\delta\in\C^\W$ and $w\in\W$, $\delta_w$ is the complex number that $\delta$ associates with $w$. To each $\delta\in\C^\W$ there corresponds a {\em word series} (relative to the mappings $f_\ell$); this is the formal series
$$
W_\delta(y) = \sum_{\delta\in\W} \delta_wf_w(y).
$$
The numbers $\delta_w$, $w\in\W$, are the {\em coefficients} of the series.

Let us present an example. If for each letter $\ell\in A$, $\lambda_\ell(t)$ is a scalar-valued function of the real variable $t$, the solution of initial value problem
\begin{equation}\label{eq:chensystem}
\frac{d}{dt} y = \sum_{\ell\in A}\lambda_\ell(t) f_\ell(y),\quad y(t_0) = y_0\in\C^D
\end{equation}
has a formal expansion in terms of word series given by
\begin{equation}\label{eq:ws1}
y(t)= W_{\alpha(t;t_0)}(y_0),
\end{equation}
where, for each $t$, $t_0$, the coefficients $\alpha_w(t;t_0)$ are the iterated integrals
\begin{equation}\label{eq:alpha}
\alpha_{\ell_1\cdots \ell_n}\!(t;t_0) = \int_{t_0}^t dt_n\,\lambda_{\ell_n}\!(t_n)\int_{t_0}^{t_n}dt_{n-1}\,\lambda_{\ell_{n-1}}\!(t_{n-1})\cdots \int_{t_0}^{t_2} dt_1\,\lambda_{\ell_1}\!(t_1).
\end{equation}
This series representation, whose standard derivation may be seen in e.g.\  \cite{orlando} or \cite{words}, is essentially the Chen series used in control theory. (An alternative derivation is presented below.) Of much importance in what follows is the fact that the coefficients $\alpha_w(t;t_0)$ depend only on the $\lambda_\ell(t)$ in (\ref{eq:chensystem}) and do not change with the vector fields $f_\ell(y)$; on the contrary, the word basis functions $f_w(y)$ depend on the $f_\ell(y)$ and do not change with the $\lambda_\ell(t)$.

\subsection{The convolution product}

The {\em convolution product} $\delta\star\delta^\prime\in\C^\W$  of two elements $\delta,\delta^\prime\in\C^\W$ is defined by
$$
(\delta\star\delta^\prime)_{\ell_1\cdots \ell_n} = \delta_\emptyset\delta^\prime_{\ell_1\cdots \ell_n}
+ \sum_{j=1}^{n-1} \delta_{\ell_1\cdots \ell_j}\delta^\prime_{\ell_{j+1}\cdots \ell_n}
+\delta_{\ell_1\cdots \ell_n}\delta^\prime_\emptyset,\quad n\geq 1
$$
($(\delta\star\delta^\prime)_\emptyset = \delta_\emptyset\delta^\prime_\emptyset$). The operation $\star$ is not commutative, but it is associative and has a unit (the element $\uno\in \C^\W$ with $\uno_\emptyset = 1$ and $\uno_w = 0$ for $w\neq \emptyset$).

If $w$ and $w^\prime$ are words, their {\em shuffle product} will be denoted by  $w\sh w^\prime$; this is the formal sum of all words that may be formed by interleaving the letters of $w$ with those of $w^\prime$ without altering the order in which those letters appear within $w$ or $w^\prime$ (e.g., $\ell m \sh n = \ell m n + \ell n m + n \ell m$).
 The set $\G$ consists of those $\gamma \in \C^\W$  that satisfy the following {\em shuffle relations:} $\gamma_\emptyset = 1$ and, for each $w,w^\prime\in \W$,
$$
\gamma_w\gamma_{w^\prime} = \sum_{j=1}^N \gamma_{w_j}\qquad \mbox{\rm if}\qquad w\sh w^\prime = \sum_{j=1}^N w_j.
$$
 This set is a group for the operation $\star$. For each $t$ and $t_0$ the element $\alpha(t;t_0)\in\C^\W$ in (\ref{eq:alpha}) belongs to the group $\G$.

For $\gamma\in\G$,  $\delta\in\C^\W$,
\begin{equation}\label{eq:act}
W_\delta\big (W_{\gamma}(x)\big) = W_{\gamma\star \delta}(x).
\end{equation}
In words: the substitution of $W_{\gamma}(x)$ in an arbitrary word series $W_\delta(x)$  gives rise to a new word series whose coefficients are
given by the convolution product $\gamma\star\delta$.
We emphasize that this result does not hold for arbitrary $\gamma\in \C^\W$, the hypothesis  $\gamma\in \G$ is essential.

Another property  of the word series  $W_\gamma(y)$ with $\gamma\in \G$ is its {\em equivariance} \cite{china}
with respect to arbitrary changes of variables $y = C(\bar y)$. If $\bar f_\ell(\bar y)$ is the result (pullback) of changing variables in the field $f_\ell(y)$, i.e.,
$$
\bar f_\ell(\bar y) = C^\prime(\bar y)^{-1}f_\ell(C(\bar y)),
$$
and $\bar W_\gamma(\bar y)$ denotes the word series with coefficients $\gamma_w$ constructed from the fields
$\bar f_\ell(\bar y)$, then
$$
C\big(\bar W_\gamma(\bar y)\big) = W_\gamma(C(\bar y)).
$$

We denote by $\g$ the vector subspace of $\C^\W$ consisting of those $\beta$  that satisfy the following shuffle relations: $\beta_\emptyset = 0$ and for each pair of nonempty words $w,w^\prime$,
\[
\sum_{j=1}^N \beta_{w_j} = 0\qquad \mbox{\rm if}\qquad w\sh w^\prime = \sum_{j=1}^N w_j.
\]
It is easily proved that the elements $\beta\in\g$ are precisely the velocities $(d/dt)\gamma(0)$ at $t=0$ of the smooth curves $t\mapsto \gamma(t)\in\G$ with $\gamma(0) = \uno$, i.e., if $\G$ is formally viewed as a Lie group, then $\g$ is the corresponding Lie algebra. In fact, $\G$ and $\g$ are the group of characters and the Lie algebra of infinitesimal characters of the shuffle Hopf algebra (see \cite{words} for details).

\subsection{Universal formulations}

Let us consider once more the differential system (\ref{eq:chensystem}). Define, for fixed $t$, $\beta(t)\in\g\subset \C^\W$  by $\beta_\ell(t) = \lambda_\ell(t)$, for each $\ell\in A$, and $\beta_w(t)=0$ if the word $w$ is empty or has $\geq 2$ letters. Then the right hand-side of (\ref{eq:chensystem}) is simply the word series $W_{\beta(t)}(y)$. We look for the solution $y(t)$ in the  word series form (\ref{eq:ws1}), with undetermined
 coefficients $\alpha_w(t;t_0)$  that have to be determined and {\em have to belong} to the group $\G$. By using the formula (\ref{eq:act}),
we may write
\begin{eqnarray*}
&&\frac{\partial}{\partial t}W_{\alpha(t;t_0)}(y_0) = W_{\beta(t)}\big(W_{\alpha(t;t_0)}(y_0)\big)= W_{\alpha(t;t_0)\star \beta(t)}(y_0),\\
&& W_{\alpha(t_0,t_0)}(y_0) = y_0 = W_{\uno}(y_0),
\end{eqnarray*}
and these equations will be satisfied if
\begin{equation}\label{eq:univedo}
\frac{\partial}{\partial t} \alpha(t;t_0) = \alpha(t;t_0) \star \beta(t),\qquad \alpha(t_0,t_0) = \uno.
\end{equation}
This is an initial value problem for the curve $t\mapsto \alpha(t;t_0)$ in the group $\G$ ($t_0$ is a parameter) and may be uniquely solved by successively determining the values $\alpha_w(t;t_0)$ for words of increasing length. In fact, for the empty word, the requirement
$\alpha(t;t_0) \in\G$ implies $\alpha_\emptyset(t;t_0) = 1$. For words with one letter $\ell\in A$, using $\beta_\emptyset(t) = 0$ and the definition of the convolution product $\star$, we have the conditions
$$
\frac{\partial}{\partial t} \alpha_\ell(t;t_0) = \beta_\ell(t)\alpha_{\emptyset}(t;t_0) = \lambda_\ell(t),\quad \alpha_\ell(t_0,t_0) = 0,
$$
that lead to
$$
\alpha_\ell(t;t_0) = \int_{t_0}^t dt_1 \lambda_\ell(t_1).
$$
This procedure may be continued (see \cite{words} for details) to determine uniquely $\alpha_w(t;t_0)$ for all words $w\in\W$. In addition, for each $t$ and $t_0$, the element
$\alpha(t;t_0)\in\C^\W$ found in this way belongs to $\G$, as it was desired.
 Of course this element coincides with that defined in (\ref{eq:alpha}).

In going from (\ref{eq:chensystem}) to (\ref{eq:univedo}) we move from an initial value problem for the vector-valued function $y(t)$  to a seemingly more complicated initial value problem for the function $\alpha(t)$ with values in $\G$. However the abstract problem in $\G$ is  linear and easily solvable. Of equal importance to us is the fact that (\ref{eq:univedo})
is {\em universal} in the sense that, once it has been integrated, one readily writes, by changing the word basis functions, the solution (\ref{eq:ws1}) of {\em each} problem obtained by replacing in (\ref{eq:chensystem})
the mappings $f_\ell(y)$ by other choices. In particular the universal character of the formulation implies that (\ref{eq:univedo}) is independent of the dimension $D$ of (\ref{eq:chensystem}).

\section{Averaging of quasiperiodically forced systems}

In this section, we consider the oscillatory initial value problem
\begin{equation}\label{eq:odey}
\frac{d}{dt}  y=  \epsilon f(y,t\omega),\qquad y(t_0) =  y_0\in \C^D,
\end{equation}
in a long interval $t_0\leq t\leq t_0+L/\epsilon$. The vector field $f(y,\theta)$ is $2\pi$-periodic in each of the scalar components (angles) $\theta_1$,\dots, $\theta_d$ of $\theta$ (i.e., $\theta \in\T^d$) and $\omega$ is a constant vector of frequencies $\omega_1$,\dots,$\omega_d$. These are assumed to be {\em nonresonant} i.e.\ $\bk\cdot \omega \neq 0$ for each multiindex $\bk \in\Z^d$, $\bk\neq \zero$; resonant problems may be rewritten in nonresonant  form by reducing the number of frequencies. Thus the forcing in (\ref{eq:odey}) is quasiperiodic if $d>1$ and periodic if $d=1$. Our aim is to find a time-dependent change of variables $y = U(Y,t\omega;\epsilon)$ that formally brings the differential system (\ref{eq:odey}) into autonomous form \cite{SVM07}. Our approach is based on a {\em universal}
formulation, analogous to the one we used above to deal with (\ref{eq:chensystem}).

\subsection{The solution of the oscillatory problem}
After Fourier expanding
\begin{equation*}
f(y,\theta) = \sum_{\bk\in\Z^d} \exp(i\bk\cdot \theta) \widehat{f}_\bk(y),
\end{equation*}
 the problem (\ref{eq:odey}) becomes a particular case of (\ref{eq:chensystem}); each letter $\ell$ is a multiindex $\bk\in\Z^d$,  $f_\ell(y) = f_\bk(y)= \epsilon \widehat{f}_\bk(y)$, and
\begin{equation}
\label{eq:lambdaexp}
 \lambda_\ell(t) = \exp(i\bk\cdot \omega t).
\end{equation}
  Each word basis function $f_w(y)$ contains the factor $\epsilon^n$ if $w$ has length $n$. The first few coefficients (iterated integrals) $\alpha_w(t;t_0)$ in (\ref{eq:alpha}) are easily computed; here are a few instances
 \begin{eqnarray}
 \alpha_\emptyset(t;t_0) &=& 1,\nonumber\\
 \alpha_\zero(t;t_0) &=& t-t_0,\nonumber\\
 \alpha_{\bk}(t;t_0) &=&
 \frac{i\big(\exp(i\bk\cdot\omega t_0)-\exp(i\bk\cdot\omega t)\big)}
 {\bk\cdot\omega},\quad \bk\neq\zero,\nonumber\\
 \alpha_{\zero\zero}(t;t_0) &=& \frac{(t-t_0)^2}{2},\nonumber\\
  \alpha_{\bk\bl}(t;t_0) &=& \frac{i(t-t_0)}{\bk\cdot\omega}+\frac{1-\exp(i\bk\cdot\omega t)\exp(-i\bk\cdot\omega t_0)}{(\bk\cdot\omega)^2},\quad \bk\neq \zero,\:\bl =-\bk.\label{eq:barajas}
 \end{eqnarray}
 Note the oscillatory components present in some of the coefficients.

The following result shows how the coefficients $\alpha_w(t;t_0)$  can be determined recursively without explicitly carrying out the integrations in  (\ref{eq:alpha}).
\begin{proposition}
\label{prop:alpha1-5}
The coefficients (\ref{eq:alpha}) with the $\lambda_\ell(t)$ given by (\ref{eq:lambdaexp}) are uniquely determined by
the recursive formulas
\begin{equation}
\label{eq:alpha1-5}
\begin{split}
\alpha_{\bk}(t; t_0) &=
 \frac{i\big(\exp(i\bk\cdot\omega t_0)-\exp(i\bk\cdot\omega t)\big)}
 {\bk\cdot\omega},\\
 \alpha_{\zero^r}(t; t_0) &=( t-t_0)^r/r!, \\
 \alpha_{{\bf 0}^r \bk}(t; t_0) &= \displaystyle \frac{i}{\bk\cdot \omega}( \alpha_{{\bf 0}^{r-1}\bk}(t; t_0) -  \alpha_{{\bf 0}^r}(t; t_0) e^{i \bk \cdot \omega t}),\\
\alpha_{\bk \bl_1\cdots \bl_s}(t; t_0) &= \displaystyle \frac{i}{\bk\cdot \omega}(
   e^{i \bk \cdot \omega t_0} \alpha_{\bl_1\cdots \bl_s}(t; t_0)- \alpha_{(\bk+\bl_1) \bl_2\cdots \bl_s}(t; t_0)), \\
\alpha_{{\bf 0}^r \bk \bl_1\cdots \bl_s}(t; t_0) &=\displaystyle \frac{i}{\bk\cdot \omega}(
    \alpha_{{\bf 0}^{r-1} \bk \bl_1\cdots \bl_s}(t; t_0) -  \alpha_{{\bf 0}^r (\bk+\bl_1) \bl_2\cdots \bl_s}(t; t_0)),
 \end{split}
 \end{equation}
where  $r\geq 1$, $\bk\in \Z^d\backslash\{\bf 0\}$,  and $\bl_1,\ldots,\bl_s \in \Z^d$.
\end{proposition}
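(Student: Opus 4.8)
The plan is to establish two things: that the iterated integrals (\ref{eq:alpha}) with the $\lambda_\ell$ of (\ref{eq:lambdaexp}) satisfy the five identities of (\ref{eq:alpha1-5}), and that these identities have a unique solution in $\C^\W$. The first part will be a direct computation on the innermost integration(s) of (\ref{eq:alpha}); the only role of nonresonance is to guarantee $\bk\cdot\omega\neq 0$ when $\bk\neq\zero$, so that the divisions make sense. The second part will be an induction on word length, once one checks that every nonempty word matches exactly one of the five lines.

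For the verification I would argue as follows. The first identity is just $\int_{t_0}^t e^{i\bk\cdot\omega s}\,ds$, and the second is the $r$-fold iterated integral of the constant $1$. For a word $\zero^r\bk\bl_1\cdots\bl_s$ with $r\geq1$, $\bk\neq\zero$ (the third identity being the case $s=0$, the fifth the case $s\geq1$), I would first carry out the $r$ innermost integrations, all with integrand $1$, which collapse into the polynomial $(s-t_0)^r/r!$ multiplying the weight $e^{i\bk\cdot\omega s}$ in the next integration variable $s$. One integration by parts in $s$ — antidifferentiating $e^{i\bk\cdot\omega s}\,ds$, which brings out $i/(\bk\cdot\omega)$ up to sign, and differentiating the polynomial — does the job: the boundary term at the lower limit $t_0$ vanishes because $r\geq1$; the leftover integral has the polynomial degree lowered by one and so reproduces $\alpha_{\zero^{r-1}\bk\bl_1\cdots\bl_s}$; and the boundary term at the upper limit keeps the polynomial together with a freed exponential $e^{i\bk\cdot\omega(\cdot)}$, which when $s=0$ is simply $e^{i\bk\cdot\omega t}\alpha_{\zero^r}(t;t_0)$, and when $s\geq1$ is carried by the remaining outer integrations into a leading block $\zero^r$ while the exponential merges with the adjacent weight $\lambda_{\bl_1}=e^{i\bl_1\cdot\omega(\cdot)}$ into $\lambda_{\bk+\bl_1}$, producing $\alpha_{\zero^r(\bk+\bl_1)\bl_2\cdots\bl_s}$. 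Collecting terms assembles the third and fifth identities. For the fourth identity ($r=0$, $\bk\neq\zero$) the innermost integral is already $\int_{t_0}^{t_2}e^{i\bk\cdot\omega t_1}\,dt_1=\alpha_\bk(t_2;t_0)$; its $e^{i\bk\cdot\omega t_0}$ summand factors out of all outer integrations to leave $\tfrac{i}{\bk\cdot\omega}e^{i\bk\cdot\omega t_0}\alpha_{\bl_1\cdots\bl_s}(t;t_0)$, while its $e^{i\bk\cdot\omega t_2}$ summand merges with the adjacent weight $\lambda_{\bl_1}$ into $\lambda_{\bk+\bl_1}$ and yields $-\tfrac{i}{\bk\cdot\omega}\alpha_{(\bk+\bl_1)\bl_2\cdots\bl_s}(t;t_0)$.

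For uniqueness I would note that every nonempty word over $\Z^d$ falls into exactly one of the shapes $\zero^r$ $(r\geq1)$, $\bk$, $\zero^r\bk$ $(r\geq1)$, $\bk\bl_1\cdots\bl_s$ $(s\geq1)$, $\zero^r\bk\bl_1\cdots\bl_s$ $(r\geq1,\,s\geq1)$ — with $\bk\neq\zero$ and $\bl_1,\ldots,\bl_s\in\Z^d$ — according to its number of leading zeros and to whether a nonzero letter occurs and is followed by further letters; these are governed by, respectively, the second, first, third, fourth and fifth formula of (\ref{eq:alpha1-5}). The first two are explicit and fix, in particular, all words of length $\leq1$; each of the other three expresses $\alpha_w$ as a linear combination of coefficients indexed by strictly shorter words (the case $\bk+\bl_1=\zero$ only changes the shape of the shorter word, not its length). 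Induction on the length of $w$ then gives uniqueness, and the first part shows that (\ref{eq:alpha}) realizes this unique solution.

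The step I expect to be the main obstacle is the bookkeeping in the fifth identity: one must keep the nesting straight so that the exponential released by the integration by parts lands on the correct integration variable — the one carrying $\lambda_{\bl_1}$ — and so that the surviving outer integrations are recognized, unchanged, as the iterated integral defining the shorter word $\zero^r(\bk+\bl_1)\bl_2\cdots\bl_s$. Once the indices are arranged with care, everything else is routine.
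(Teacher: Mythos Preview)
Your argument is correct. You verify the identities of (\ref{eq:alpha1-5}) directly on the iterated integrals---computing the innermost integral for the fourth line, and collapsing the $r$ innermost constant integrations followed by one integration by parts for the third and fifth lines---and then establish uniqueness by a clean case split on the shape of the word together with induction on length.

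The paper takes a different route. Rather than working on the integrals, it regards (\ref{eq:alpha1-5}) as \emph{defining} the $\alpha_w$, differentiates each line with respect to $t$, and checks by induction on the number of letters that the resulting functions satisfy the characterising relations
\[
\frac{d}{dt}\alpha_{\bk_1\cdots\bk_n}(t;t_0)=e^{i\bk_n\cdot\omega t}\,\alpha_{\bk_1\cdots\bk_{n-1}}(t;t_0),\qquad \alpha_{\bk_1\cdots\bk_n}(t_0;t_0)=0,
\]
which determine the iterated integrals (\ref{eq:alpha}) uniquely. The paper's inductive step is thus a one-line differentiation rather than an integration by parts with nested indices, so it is a little slicker and sidesteps the bookkeeping you flag as the main obstacle. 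On the other hand, your direct computation makes transparent \emph{how} the recursions arise from the integrals (the paper only remarks in passing that they are ``found by evaluating the innermost integral''), and your explicit uniqueness argument---which the paper takes for granted---is a useful addition.
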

\begin{proof} It is useful to point out that the formulas (\ref{eq:alpha1-5}) are found by evaluating the innermost integral in (\ref{eq:alpha}). To prove the proposition we show that the coefficients $\alpha_{\bk_1\cdots \bk_n}(t;t_0)$ uniquely determined by (\ref{eq:alpha1-5}) coincide with those in (\ref{eq:alpha}). The latter satisfy,
for all words $w=\bk_1 \cdots \bk_n$,
\begin{equation}\label{eq:alpha_aux}
\frac{d}{dt} \alpha_{\bk_1\cdots \bk_n}(t;t_0) = \exp(i\bk_n\cdot \omega t) \alpha_{\bk_1\cdots \bk_{n-1}}(t;t_0), \qquad
\alpha_{\bk_1\cdots \bk_n}(t_0;t_0)=0.
\end{equation}
We prove by induction on $n$ that the coefficients in (\ref{eq:alpha1-5}) also satisfy (\ref{eq:alpha_aux}).
One can trivially check the case $n=1$. For each word $w=\bk_1\cdots \bk_n$ with $n>1$,   one arrives at (\ref{eq:alpha_aux}) by differentiating with respect to $t$
both sides of the  equality in (\ref{eq:alpha1-5})
that determines $\alpha_{\bk_1\cdots \bk_n}(t;t_0)$  and applying the induction hypothesis. \qed
\end{proof}

\subsection{The transport equation}

It follows from Proposition \ref{prop:alpha1-5}   that each $\alpha_w(t;t_0)$ is of the form
 \begin{equation}\label{eq:alphaGamma}
  \alpha_w(t;t_0)=\Gamma_w(t-t_0, \omega t;\omega t_0),
 \end{equation}
 where
 $\Gamma_w(\tau, \theta;\theta_0)$ is a suitable scalar-valued  function, which is, as a function of $\tau\in\R$, a polynomial and as a function of $\theta\in\T^d$ (or of $\theta_0\in\T^d$) a trigonometric polynomial. For instance, for $\bk\neq \zero$, $\bl =-\bk$,  (see (\ref{eq:barajas})),
 $$
 \Gamma_{\bk\bl}(\tau,\theta;\theta_0) = \frac{i\tau}{\bk\cdot\omega}+\frac{1-\exp(i\bk\cdot\theta)\exp(-i\bk\cdot\theta_0)}{(\bk\cdot\omega)^2}.
 $$

 Of course the $\Gamma_w$ can be found recursively by mimicking (\ref{eq:alpha1-5}). The following result summarizes this discussion.
 \begin{theorem}
 \label{th:Gamma} Define, for each $w\in\W$, $\Gamma_w(\tau,\theta;\theta_0)$ by means of the following recursions.
 $\Gamma_{\emptyset}(\tau,\theta; \theta_0)=1$, and
given  $r\geq 1$, $\bk\in \Z^d-\{\bf 0\}$,  and $\bl_1,\ldots,\bl_s \in \Z^d$,
\begin{equation}
\label{eq:Gamma1-5}
\begin{split}
\Gamma_{\bk}(\tau, \theta;  \theta_0)&=\displaystyle \frac{i}{\bk\cdot \omega} (e^{i \bk\cdot \theta_0}-e^{i \bk\cdot \theta}), \\
\Gamma_{{\bf 0}^r}(\tau, \theta;  \theta_0) &=\tau^r/{r!}, \\
\Gamma_{{\bf 0}^r \bk}(\tau, \theta;  \theta_0) &= \displaystyle \frac{i}{\bk\cdot \omega}( \Gamma_{{\bf 0}^{r-1}\bk}(\tau, \theta;  \theta_0) -  \Gamma_{{\bf 0}^r}(\tau, \theta;  \theta_0) e^{i \bk \cdot \theta}),\\
\Gamma_{\bk \bl_1\cdots \bl_s}(\tau, \theta;  \theta_0) &=\displaystyle \frac{i}{\bk\cdot \omega}(
  e^{i \bk \cdot \theta_0}  \Gamma_{\bl_1\cdots \bl_s}(\tau, \theta;  \theta_0)- \Gamma_{(\bk+\bl_1) \bl_2\cdots \bl_s}(\tau, \theta;  \theta_0)), \\
\Gamma_{{\bf 0}^r \bk \bl_1\cdots \bl_s}(\tau, \theta;  \theta_0) &=\displaystyle \frac{i}{\bk\cdot \omega}(
    \Gamma_{{\bf 0}^{r-1} \bk \bl_1\cdots \bl_s}(\tau, \theta;  \theta_0) -  \Gamma_{{\bf 0}^r (\bk+\bl_1) \bl_2\cdots \bl_s}(\tau, \theta;  \theta_0)).
\end{split}
\end{equation}
Then, for each $w\in\W$, $\Gamma_w(\tau,\theta;\theta_0)$ is a polynomial in $\tau$ and a trigonometric polynomial in $\theta$ and in $\theta_0$ and the coefficient $\alpha_w(t;t_0)$ of the oscillatory solution satisfies
(\ref{eq:alphaGamma}).
\end{theorem}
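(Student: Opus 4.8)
The plan is to compare the functions $\Gamma_w$ defined by the recursions (\ref{eq:Gamma1-5}) with the coefficients $\alpha_w(t;t_0)$ of (\ref{eq:alpha}), exploiting the fact that Proposition \ref{prop:alpha1-5} already gives an analogous recursion (\ref{eq:alpha1-5}) for the $\alpha_w$. Indeed, inspecting (\ref{eq:alpha1-5}) and (\ref{eq:Gamma1-5}) side by side, one sees that they are the \emph{same} recursions after the substitution $\tau \leftrightarrow t-t_0$, $\theta \leftrightarrow \omega t$, $\theta_0 \leftrightarrow \omega t_0$: every occurrence of $\exp(i\bk\cdot\omega t)$ in (\ref{eq:alpha1-5}) becomes $\exp(i\bk\cdot\theta)$ in (\ref{eq:Gamma1-5}), every $\exp(i\bk\cdot\omega t_0)$ becomes $\exp(i\bk\cdot\theta_0)$, and the pure-$\zero^r$ term $(t-t_0)^r/r!$ becomes $\tau^r/r!$. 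So the identity (\ref{eq:alphaGamma}) will follow by a straightforward induction on the length $n$ of the word $w$: the base cases ($w=\emptyset$ and $w$ a single letter) are immediate from the explicit formulas, and for $|w|=n>1$ each of the three inductive clauses of (\ref{eq:Gamma1-5}) expresses $\Gamma_w$ in terms of $\Gamma_{w'}$ with $|w'|<n$ (or $|w'|=n$ but with strictly fewer leading zeros, so a secondary induction on the number of leading zeros handles the $\zero^r$-prefixed clauses), and the corresponding clause of (\ref{eq:alpha1-5}) does the same for $\alpha_w$; substituting $\tau=t-t_0$, $\theta=\omega t$, $\theta_0=\omega t_0$ turns one into the other.

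Next I would establish the structural claim that each $\Gamma_w(\tau,\theta;\theta_0)$ is a polynomial in $\tau$ and a trigonometric polynomial in $\theta$ and in $\theta_0$. This is again an induction on word length running in parallel with the one above. The base cases are clear: $\Gamma_\emptyset=1$, $\Gamma_{\zero^r}=\tau^r/r!$ is polynomial in $\tau$ with no $\theta$ dependence, and for $\bk\neq\zero$, $\Gamma_\bk=\frac{i}{\bk\cdot\omega}(e^{i\bk\cdot\theta_0}-e^{i\bk\cdot\theta})$ is a trigonometric polynomial in $\theta,\theta_0$ (degree zero in $\tau$). For the inductive step, one checks that each of the three remaining clauses of (\ref{eq:Gamma1-5}) preserves the desired form: multiplying a polynomial-in-$\tau$/trigonometric-polynomial-in-$(\theta,\theta_0)$ function by $e^{i\bk\cdot\theta}$ or $e^{i\bk\cdot\theta_0}$ keeps it trigonometric in the angles and does not touch $\tau$; taking $\Z^d$-linear combinations of such functions (the $\bk+\bl_1$ reindexing is harmless since $\Z^d$ is closed under addition) stays in the class; and the prefactor $i/(\bk\cdot\omega)$ is a well-defined scalar precisely because $\omega$ is nonresonant and $\bk\neq\zero$. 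The only mild subtlety is that in the clause $\Gamma_{\zero^r\bk}$ the factor $\Gamma_{\zero^r}=\tau^r/r!$ carries the $\tau$-dependence, so after multiplication by $e^{i\bk\cdot\theta}$ the result is (polynomial in $\tau$)$\times$(trigonometric in $\theta$) — still exactly of the asserted product form — and likewise for the last clause.

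Having proved both the structural statement and the identity (\ref{eq:alphaGamma}), the theorem is complete: the recursions (\ref{eq:Gamma1-5}) manifestly have a unique solution (they determine $\Gamma_w$ by strong induction on $(|w|, \#\text{leading zeros})$), this solution has the claimed polynomial/trigonometric-polynomial form, and its value at $(\tau,\theta;\theta_0)=(t-t_0,\omega t;\omega t_0)$ equals $\alpha_w(t;t_0)$ by the matching of recursions.

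I do not anticipate a serious obstacle here — the content is essentially bookkeeping, since Proposition \ref{prop:alpha1-5} has already done the analytic work of verifying that the $\alpha_w$ satisfy (\ref{eq:alpha1-5}). The one point requiring a little care is the organization of the induction: because clauses three and five of (\ref{eq:Gamma1-5}) express a word with $r$ leading zeros in terms of words with $r-1$ leading zeros of the \emph{same} total length, the induction must be on a lexicographically ordered pair (length, number of leading $\zero$'s) rather than on length alone, and one must check that the recursions (\ref{eq:Gamma1-5}) genuinely cover every word $w\in\W$ exactly once — which they do, since any nonempty word is uniquely of the form $\zero^r$, or $\zero^r\bk$ with $\bk\neq\zero$, or $\zero^r\bk\bl_1\cdots\bl_s$ with $\bk\neq\zero$ and $s\geq 1$, for a unique $r\geq 0$.
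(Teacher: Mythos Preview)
Your proposal is correct and follows essentially the same approach as the paper, which treats the theorem as a summary of the discussion preceding it: the recursions (\ref{eq:Gamma1-5}) are obtained by mimicking (\ref{eq:alpha1-5}) under the substitution $(\tau,\theta,\theta_0)\leftrightarrow(t-t_0,\omega t,\omega t_0)$, and the polynomial/trigonometric-polynomial structure is read off from those recursions. One small simplification: your secondary induction on the number of leading zeros is unnecessary, since in every clause of (\ref{eq:Gamma1-5}) the words appearing on the right-hand side have \emph{strictly shorter} length than the word on the left (for instance, in the last clause both $\zero^{r-1}\bk\bl_1\cdots\bl_s$ and $\zero^r(\bk+\bl_1)\bl_2\cdots\bl_s$ have $r+s$ letters versus $r+s+1$), so plain induction on word length already suffices.
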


 Substituting (\ref{eq:alphaGamma})
 in the initial value problem (\ref{eq:univedo}) that characterizes $\alpha(t;t_0)$, we find, after using the chain rule,
 \begin{eqnarray*}
 &&\frac{\partial}{\partial \tau} \Gamma(t-t_0, t\omega ;t_0\omega ) +
 \omega\cdot \nabla_\theta  \Gamma(t-t_0, t\omega ;t_0\omega )  = \Gamma(t-t_0, t\omega ;t_0\omega )\star B(t\omega ),\\
 &&\Gamma(0,t_0\omega;t_0\omega) = \uno,
 \end{eqnarray*}
 where  $B(\theta) \in\g$ is defined as $B_\bk(\theta) = \exp(i\bk\cdot \theta)$, $\bk \in\Z^d$, and $B_w(\theta)= 0$ if the length of $w$ is not 1.
We thus have that for all $(\tau, \theta;\theta_0)$ of the form $(t-t_0,t\omega; t_0 \omega)$, the following equation is valid:
\begin{equation}\label{eq:transport}
\frac{\partial}{\partial \tau} \Gamma(\tau, \theta;\theta_0) +
 \omega\cdot \nabla_\theta  \Gamma(\tau, \theta;\theta_0)  = \Gamma(\tau, \theta;\theta_0)\star B(\theta),\quad \Gamma(0,\theta_0;\theta_0)=\uno.
\end{equation}
Actually, it can be proved, by mimicking the proof of Proposition~\ref{prop:alpha1-5} (with $d/dt$ replaced by the operator $\partial/\partial \tau +  \omega\cdot \nabla_\theta$), that (\ref{eq:transport}) holds for arbitrary $(\tau, \theta;\theta_0) \in \R \times \T^d \times \T^d$.

We have thus found a {\em transport equation} for $\Gamma$ as a function of $\tau$ and $\theta$ ($\theta_0$ plays the role of a parameter).\footnote{The presence of this parameter is linked to the fact that the transport equation is nonautonomous in the variable $\theta$.} For this partial differential equation, a standard initial condition would prescribe the value of $\Gamma(0,\theta; \theta_0)$ as a function of $\theta\in\T^d$ (and of the parameter $\theta_0$); in
(\ref{eq:transport}), $\Gamma(0,\theta;\theta_0)$ is only given at the single point $\theta=\theta_0$. Therefore (\ref{eq:transport}) may be expected to have many solutions; only one of them is such that,
for each $w\in\W$, $\Gamma_w(\tau, \theta;\theta_0) $ depends polynomially on $\tau$ as we shall establish in Proposition~\ref{prop:uniquenessGamma}. We shall use an auxiliary result whose simple proof will be ommitted (cf.~Lemma 2.4 in \cite{part2}):

\begin{lemma}
\label{l:z}
Let the vector $\omega \in \R^d$ be nonresonant. If a smooth function $z: \R \times \T^d \to \C$ satisfies
\begin{equation*}
\frac{\partial}{\partial \tau} z(\tau, \theta) +
 \omega\cdot \nabla_\theta  z(\tau, \theta) = 0, \quad z(0,\theta_0)=0,
\end{equation*}
and $z(\tau,\theta)$ is polynomial in $\tau$, then  $z(\tau,\theta)$ is identically zero.
 \end{lemma}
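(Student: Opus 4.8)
The plan is to turn the hypothesis \lq\lq polynomial in $\tau$\rq\rq\ into an algebraic condition by expanding $z$ in powers of $\tau$ and matching coefficients in the transport equation. First I would write $z(\tau,\theta)=\sum_{j=0}^{m}\tau^{j}a_{j}(\theta)$ with $m<\infty$ and each $a_{j}:\T^{d}\to\C$ smooth and $2\pi$-periodic (the $a_{j}$ are, up to factorials, the $\tau$-derivatives of $z$ at $\tau=0$, so they inherit smoothness and periodicity in $\theta$). Substituting into $\partial_{\tau}z+\omega\cdot\nabla_{\theta}z=0$ and equating the coefficients of equal powers of $\tau$ gives the chain of identities
\[
\omega\cdot\nabla_{\theta}a_{m}=0,\qquad (j+1)\,a_{j+1}+\omega\cdot\nabla_{\theta}a_{j}=0\quad(0\le j\le m-1).
\]

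Next I would exploit nonresonance. Expanding $a_{m}(\theta)=\sum_{\bk\in\Z^{d}}\widehat a_{m,\bk}\exp(i\bk\cdot\theta)$, the equation $\omega\cdot\nabla_{\theta}a_{m}=0$ reads $\sum_{\bk}i(\bk\cdot\omega)\widehat a_{m,\bk}\exp(i\bk\cdot\theta)=0$; since $\bk\cdot\omega\neq 0$ for $\bk\neq\zero$, every nonzero Fourier mode vanishes and $a_{m}$ is constant. Then I would run a downward induction on $j$: assuming $a_{j+1}$ constant, I integrate $(j+1)a_{j+1}+\omega\cdot\nabla_{\theta}a_{j}=0$ over $\T^{d}$; the integral of $\omega\cdot\nabla_{\theta}a_{j}$ vanishes (a sum of integrals over $\T^{d}$ of $\theta$-derivatives of periodic functions), so $a_{j+1}=0$ because $j+1\ge 1$, whence $\omega\cdot\nabla_{\theta}a_{j}=0$ and, by the same Fourier argument, $a_{j}$ is constant. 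This forces $a_{1}=\cdots=a_{m}=0$ and leaves $z(\tau,\theta)\equiv a_{0}$ with $a_{0}$ a constant; the pointwise condition $z(0,\theta_{0})=0$ then gives $a_{0}=0$, i.e.\ $z\equiv 0$.

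The main obstacle is the very first step: making precise what \lq\lq$z$ polynomial in $\tau$\rq\rq\ means and extracting a finite expansion $\sum_{j=0}^{m}\tau^{j}a_{j}(\theta)$ whose degree $m$ does not vary with $\theta$ (for the functions $\Gamma_{w}$ in the application this is automatic, each being a genuine polynomial in $\tau$ of fixed degree, so in practice no subtlety arises). Everything afterwards is routine: matching powers of $\tau$, the elementary description of $\ker(\omega\cdot\nabla_{\theta})$ under nonresonance, and the vanishing of torus integrals of total $\theta$-derivatives. As an alternative I could integrate along characteristics, writing $z(\tau,\theta)=z(0,\theta-\tau\omega)$ and noting that $\tau\mapsto z(0,\theta-\tau\omega)$ can be a polynomial only if the Fourier support of $z(0,\cdot)$ lies in $\{\bk:\bk\cdot\omega=0\}=\{\zero\}$, forcing $z(0,\cdot)$ and hence $z$ to be constant; but the power-series-in-$\tau$ route seems cleaner to write out in full.
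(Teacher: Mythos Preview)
The paper does not give a proof of this lemma; it explicitly omits it, writing ``We shall use an auxiliary result whose simple proof will be omitted (cf.~Lemma 2.4 in \cite{part2}).'' There is therefore nothing in the present paper to compare your argument against.

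Your argument is correct. Writing $z(\tau,\theta)=\sum_{j=0}^{m}\tau^{j}a_{j}(\theta)$, matching powers of $\tau$ in the transport equation, and using nonresonance to conclude $\ker(\omega\cdot\nabla_{\theta})=\{\text{constants}\}$ on $\T^{d}$ is exactly the standard route; the downward induction and the averaging step to kill the constant $a_{j+1}$ are clean. Your side remark about the uniformity of the degree $m$ in $\theta$ is the right caveat to raise, and your resolution is adequate: in every application within the paper the functions involved are finite $\C$-linear combinations of terms $\tau^{k}\exp(i\bk\cdot\theta)$, so the degree in $\tau$ is genuinely global, and the hypothesis ``polynomial in $\tau$'' should be read in that sense. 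The alternative characteristics argument you sketch, $z(\tau,\theta)=z(0,\theta-\tau\omega)=\sum_{\bk}\widehat z_{\bk}\,e^{i\bk\cdot\theta}e^{-i\tau\,\bk\cdot\omega}$, is equally valid and perhaps closer in spirit to how one would naturally discover the result; either would serve as the ``simple proof'' the authors allude to.
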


\begin{proposition}
\label{prop:uniquenessGamma}
The function $\Gamma(\tau, \theta;\theta_0)$ given in Theorem~\ref{th:Gamma} is the unique solution of problem (\ref{eq:transport})  such that each $\Gamma_w(\tau, \theta;\theta_0))$, $w\in\W$, is smooth in $\theta$ and polynomial in $\tau$.
\end{proposition}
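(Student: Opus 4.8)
The existence part is already settled: Theorem~\ref{th:Gamma} exhibits a $\Gamma$ built from the recursions (\ref{eq:Gamma1-5}), each $\Gamma_w$ a polynomial in $\tau$ and a trigonometric polynomial in $\theta,\theta_0$, and the discussion following (\ref{eq:transport}) shows it solves the transport equation for all $(\tau,\theta;\theta_0)$. So the only thing to prove is uniqueness within the class of solutions whose coefficients $\Gamma_w$ are smooth in $\theta$ and polynomial in $\tau$. The plan is to suppose $\widetilde\Gamma$ is another such solution, set $z = \Gamma - \widetilde\Gamma \in \C^\W$ (coefficientwise), and show $z_w \equiv 0$ for every word $w$ by induction on the length $|w|$.

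First I would record what the difference satisfies. Since both $\Gamma$ and $\widetilde\Gamma$ solve the linear-in-$\star$ equation (\ref{eq:transport}), and both agree with $\uno$ at $\tau=0$, $\theta=\theta_0$, each coefficient $z_w$ satisfies
\begin{equation*}
\frac{\partial}{\partial \tau} z_w(\tau,\theta;\theta_0) + \omega\cdot\nabla_\theta z_w(\tau,\theta;\theta_0) = \big(z\star B(\theta) - (\text{same with }\widetilde\Gamma)\big)_w,\qquad z_w(0,\theta_0;\theta_0)=0.
\end{equation*}
Unwinding the convolution product and the fact that $B(\theta)$ is supported on one-letter words, $(\delta\star B(\theta))_{\bk_1\cdots\bk_n} = \delta_{\bk_1\cdots\bk_{n-1}}\,e^{i\bk_n\cdot\theta}$; hence the right-hand side for the word $w=\bk_1\cdots\bk_n$ is $e^{i\bk_n\cdot\theta}\,z_{\bk_1\cdots\bk_{n-1}}(\tau,\theta;\theta_0)$, which involves only a strictly shorter word. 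This is the crucial algebraic simplification: the equation for $z_w$ is \emph{decoupled} into a triangular system ordered by word length, with the shorter-word coefficient acting as an inhomogeneous forcing term.

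Now the induction. For $|w|=0$, i.e. $w=\emptyset$, both $\Gamma_\emptyset$ and $\widetilde\Gamma_\emptyset$ equal $1$ (forced by the value at $\tau=0,\theta=\theta_0$ together with the $\tau$-homogeneous transport equation — this is exactly the $n=0$ instance of Lemma~\ref{l:z}), so $z_\emptyset\equiv 0$. Assume $z_{w'}\equiv 0$ for all words $w'$ with $|w'|<n$, and let $|w|=n$. By the previous paragraph the forcing term $e^{i\bk_n\cdot\theta}z_{\bk_1\cdots\bk_{n-1}}$ vanishes, so $z_w$ satisfies precisely the hypothesis of Lemma~\ref{l:z}: it is annihilated by $\partial_\tau + \omega\cdot\nabla_\theta$, it vanishes at $\tau=0$, $\theta=\theta_0$, it is smooth in $\theta$ (by hypothesis on both solutions), and it is polynomial in $\tau$ (difference of polynomials in $\tau$). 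Lemma~\ref{l:z} gives $z_w\equiv 0$, closing the induction. Therefore $\Gamma=\widetilde\Gamma$, and since the $\Gamma$ of Theorem~\ref{th:Gamma} lies in the stated class, it is the unique such solution.

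I do not anticipate a serious obstacle: the one point requiring care is confirming that the right-hand side of (\ref{eq:transport}), read coefficientwise, genuinely only couples a word to strictly shorter words — this is immediate from $B(\theta)$ being supported on one-letter words and from the definition of $\star$, but it is the linchpin that makes Lemma~\ref{l:z} applicable verbatim at each induction step. One should also note in passing that the initial data $z_w(0,\theta_0;\theta_0)=0$ at the \emph{single} point $\theta=\theta_0$ is all that Lemma~\ref{l:z} needs, which is why no full initial profile on $\T^d$ is required.
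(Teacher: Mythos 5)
Your argument is correct and is essentially the paper's own proof: take the difference of two solutions, observe that since $B(\theta)$ is supported on one-letter words the coefficientwise transport equation couples each word only to the strictly shorter word obtained by deleting its last letter, and induct on word length using Lemma~\ref{l:z}. Your explicit unwinding of $(\delta\star B(\theta))_{\bk_1\cdots\bk_n}=e^{i\bk_n\cdot\theta}\,\delta_{\bk_1\cdots\bk_{n-1}}$ just makes precise the triangular structure the paper states more tersely.
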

\begin{proof}
Let  $\delta(\tau,\theta; \theta_0)$ denote the difference of two solutions of (\ref{eq:transport}). Then,  for each  $w \in \W$, $\delta_w(0,\theta_0; \theta_0)=0$ and
\begin{equation*}
\frac{\partial}{\partial \tau} \delta_w(\tau, \theta;\theta_0) +
 \omega\cdot \nabla_\theta  \delta_w(\tau, \theta;\theta_0)
 \end{equation*}
vanishes provided that the value of $\delta(\tau, \theta;\theta_0)$ at words with less letters than $w$ vanish identically.  Lemma~\ref{l:z} then allows us to prove, by induction on the number of letters, that $\delta_w(\tau, \theta;\theta_0)\equiv 0$ for all $w \in \W$. See \cite{part2}, Section 2.4 for a similar proof.
\qed
\end{proof}

The transport problem is used in the proof of the following two theorems, which in turn play an important role in averaging.

\begin{theorem}
\label{th:GammainG}
For each $\tau \in\R$, $\theta\in\T^d$, $\theta_0\in\T^d$, the element $\Gamma(\tau, \theta;\theta_0)\in \C^{\mathcal{W}}$ belongs to $\G$.
\end{theorem}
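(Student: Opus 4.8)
The plan is to deduce the shuffle relations for $\Gamma(\tau,\theta;\theta_0)$ from the already-established membership $\alpha(t;t_0)\in\G$, using nonresonance (density of $\{t\omega\bmod 2\pi\}$ in $\T^d$) to pass from the ``physical'' arguments $(t-t_0,\omega t;\omega t_0)$ to all triples $(\tau,\theta;\theta_0)$, with the polynomial dependence on $\tau$ doing the final propagation. An alternative, closer to the proof of Proposition~\ref{prop:uniquenessGamma}, uses the transport equation and induction on word length; I describe it at the end.

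Since $\Gamma_\emptyset\equiv 1$ is immediate from the recursion (\ref{eq:Gamma1-5}), it suffices to fix an arbitrary pair of words $w,w'$ with $w\sh w'=\sum_{j=1}^N w_j$ and prove that
$$
z(\tau,\theta;\theta_0):=\Gamma_w(\tau,\theta;\theta_0)\,\Gamma_{w'}(\tau,\theta;\theta_0)-\sum_{j=1}^N\Gamma_{w_j}(\tau,\theta;\theta_0)
$$
vanishes identically. By Theorem~\ref{th:Gamma}, $z$ is a polynomial in $\tau$ and a trigonometric polynomial in $\theta$ and in $\theta_0$. Evaluating at $(\tau,\theta;\theta_0)=(t-t_0,\omega t;\omega t_0)$ and using $\Gamma_v(t-t_0,\omega t;\omega t_0)=\alpha_v(t;t_0)$ together with $\alpha(t;t_0)\in\G$ gives $z(t-t_0,\omega t;\omega t_0)=0$ for all $t,t_0\in\R$.

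Next I would remove the restriction to these arguments. For fixed $\tau$, the point $(\omega t_0+\omega\tau,\omega t_0)$ ranges, as $t_0$ runs over $\R$, over a dense subset of the curve $\{(\theta_0+\omega\tau,\theta_0):\theta_0\in\T^d\}$, because nonresonance makes $t_0\mapsto t_0\omega\bmod 2\pi$ dense in $\T^d$; continuity of $z$ in $(\theta,\theta_0)$ then yields $z(\tau,\theta_0+\omega\tau;\theta_0)=0$ for all $\tau\in\R$ and all $\theta_0\in\T^d$. Finally, fixing $\theta_0$ and writing $z(\tau,\theta;\theta_0)=\sum_{\bk\in S}e^{i\bk\cdot\theta}q_\bk(\tau)$ with $S\subset\Z^d$ finite and the $q_\bk$ polynomial in $\tau$, the vanishing on $\theta=\theta_0+\omega\tau$ becomes $\sum_{\bk\in S}e^{i\bk\cdot\theta_0}e^{i(\bk\cdot\omega)\tau}q_\bk(\tau)\equiv 0$; since nonresonance makes $\bk\mapsto\bk\cdot\omega$ injective on $\Z^d$ and the functions $\tau\mapsto e^{c\tau}$ with pairwise distinct $c$ are linearly independent over $\C[\tau]$, each $q_\bk$ vanishes, whence $z\equiv0$. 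As $w,w'$ and $\theta_0$ were arbitrary, $\Gamma(\tau,\theta;\theta_0)\in\G$ for every $(\tau,\theta;\theta_0)$.

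The only delicate point is this last propagation from a single characteristic curve to all of $\R\times\T^d$, which is exactly the mechanism behind Lemma~\ref{l:z}; everything else is continuity plus the known fact $\alpha(t;t_0)\in\G$. An alternative proof that makes the transport equation explicit is by induction on $|w|+|w'|$: from (\ref{eq:transport}) one reads $(\partial_\tau+\omega\cdot\nabla_\theta)\Gamma_v=\Gamma_{v'}\,e^{i\bk\cdot\theta}$, where $v'$ is $v$ with its last letter $\bk$ removed; splitting the shuffle $w\sh w'$ according to which of $w,w'$ supplies the final letter, and invoking the inductive hypothesis at combined length $|w|+|w'|-1$, one obtains $(\partial_\tau+\omega\cdot\nabla_\theta)z=0$; since $z$ is polynomial in $\tau$ and $z(0,\theta_0;\theta_0)=0$ because $\Gamma(0,\theta_0;\theta_0)=\uno$, Lemma~\ref{l:z} gives $z\equiv0$. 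Here the main thing to check carefully is the matching of the convolution formula for $(\Gamma\star B)_v$ with the last-letter decomposition of the shuffle product.
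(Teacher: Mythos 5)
Your main argument is correct, but it takes a genuinely different route from the paper's. The paper proves the shuffle relations for $\Gamma(\tau,\theta;\theta_0)$ by induction on the combined length of $w$ and $w'$: the transport equation (\ref{eq:transport}) together with the induction hypothesis shows that the defect $z$ is annihilated by $\partial_\tau+\omega\cdot\nabla_\theta$ and vanishes at $(\tau,\theta)=(0,\theta_0)$, so Lemma~\ref{l:z} forces $z\equiv 0$ --- this is exactly the alternative you sketch at the end, and the point you flag there (matching the convolution formula for $(\Gamma\star B)_w$ against the last-letter splitting of $w\sh w'$) is precisely the computation the paper leaves implicit by referring to \cite{words}. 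Your primary proof instead imports the shuffle relations from the known fact $\alpha(t;t_0)\in\G$ and propagates them off the set of arguments $(t-t_0,\omega t;\omega t_0)$ using the structure guaranteed by Theorem~\ref{th:Gamma}: Kronecker density of $\{t_0\omega\}$ in $\T^d$ (valid by nonresonance) plus continuity handles the $\theta_0$ variable, and the linear independence of $\tau^m e^{c\tau}$ over $\C$ for pairwise distinct $c$ --- with injectivity of $\bk\mapsto\bk\cdot\omega$ again coming from nonresonance --- handles $\theta$; both steps are sound, and the second is indeed the same mechanism that underlies Lemma~\ref{l:z}. What your route buys is the complete avoidance of the inductive bookkeeping with the shuffle and convolution products; what it costs is reliance on $\alpha(t;t_0)\in\G$, which this paper asserts by citation rather than proves, and a loss of robustness: the paper's PDE-plus-induction argument transfers essentially verbatim to the generalization in Section~4 (Theorem~\ref{th:uniquenessGamma2}), where $u$ ranges over $\C^d$ and a density argument in the second variable is no longer available.
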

\begin{proof}
The proof is very similar to the proof given in  Section 6.1.4 of \cite{words} for nonautonomous ordinary linear differential equations in $\G$.
We have to prove that
\begin{equation}
\label{eq:Gammashuffle}
 \sum_j \Gamma_{w_j}(\tau,\theta; \theta_0)  = \Gamma_{w}(\tau,\theta; \theta_0)  \Gamma_{w^\prime}(\tau,\theta; \theta_0)
\end{equation}
for  $w, w' \in\W$,  with $w\sh w^\prime= \sum_j w_j$. This is established by induction on the sum of the number of letters of $w$ and $w^\prime$. Proceeding as in  \cite{words}, by application of the induction hypothesis one arrives at
\begin{equation*}
 (\frac{\partial}{\partial \tau} + \omega \cdot \nabla_{\theta} ) \left(\sum_j \Gamma_{w_j} (\tau,\theta; \theta_0)-
 \Gamma_{w}(\tau,\theta; \theta_0)  \Gamma_{w^\prime}(\tau,\theta; \theta_0) \right)=  0.
 \end{equation*}
Since (\ref{eq:Gammashuffle}) holds at $(\tau,\theta)=(0,\theta_0)$, Lemma~\ref{l:z} implies that it does so for each value of $(\tau,\theta) \in \R \times \T^d$. \qed
\end{proof}

\begin{theorem}\label{th:gammaproperties} For arbitrary $\tau_1,\tau_2 \in \R$ and $\theta_0,\theta_1,\theta_2\in\T^d$,
\begin{equation*}
 \Gamma(\tau_1, \theta_1; \theta_0) \star \Gamma(\tau_2, \theta_2; \theta_1)=\Gamma(\tau_1 + \tau_2, \theta_2; \theta_0) .
\end{equation*}
\end{theorem}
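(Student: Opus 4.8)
The plan is to fix $\tau_1\in\R$ and $\theta_0,\theta_1\in\T^d$ and regard both sides of the claimed identity as curves in $\C^\W$ parametrized by $(\tau_2,\theta_2)\in\R\times\T^d$; that is, to set
$$
G(\tau_2,\theta_2) = \Gamma(\tau_1,\theta_1;\theta_0)\star\Gamma(\tau_2,\theta_2;\theta_1),\qquad H(\tau_2,\theta_2) = \Gamma(\tau_1+\tau_2,\theta_2;\theta_0),
$$
and to prove $G\equiv H$ (the convolution defining $G$ makes sense since $\star$ is defined on all of $\C^\W$). The point of this choice of free variables is that the left factor of $G$ does not depend on $(\tau_2,\theta_2)$ at all, while the remaining factor, as well as $H$, is governed by the transport equation (\ref{eq:transport}); the argument then follows the same pattern as the proofs of Theorem~\ref{th:GammainG} and Proposition~\ref{prop:uniquenessGamma}.

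First I would check that $G$ and $H$ solve the same linear problem. Writing $L = \partial/\partial\tau_2 + \omega\cdot\nabla_{\theta_2}$, this first--order operator obeys the Leibniz rule for the bilinear product $\star$; since $\Gamma(\tau_1,\theta_1;\theta_0)$ is annihilated by $L$, applying (\ref{eq:transport}) to $\Gamma(\tau_2,\theta_2;\theta_1)$ gives
$$
L\,G = \Gamma(\tau_1,\theta_1;\theta_0)\star\big(\Gamma(\tau_2,\theta_2;\theta_1)\star B(\theta_2)\big) = G\star B(\theta_2).
$$
For $H$, the substitution $\tau=\tau_1+\tau_2$ turns $L$ into $\partial/\partial\tau + \omega\cdot\nabla_\theta$, so the validity of (\ref{eq:transport}) at the point $(\tau_1+\tau_2,\theta_2;\theta_0)$ yields $L\,H = H\star B(\theta_2)$ as well. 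At $(\tau_2,\theta_2)=(0,\theta_1)$ the initial condition $\Gamma(0,\theta_1;\theta_1)=\uno$ in (\ref{eq:transport}) gives $G(0,\theta_1)=\Gamma(\tau_1,\theta_1;\theta_0)\star\uno=\Gamma(\tau_1,\theta_1;\theta_0)=H(0,\theta_1)$. Finally, by Theorem~\ref{th:Gamma} each coefficient $\Gamma_w(\tau_2,\theta_2;\theta_1)$ is polynomial in $\tau_2$ and smooth in $\theta_2$, and each $\Gamma_w(\tau_1+\tau_2,\theta_2;\theta_0)$ is polynomial in $\tau_2$; since the coefficients of $G$ are finite sums of products of coefficients of its two factors, every coefficient of $G$ and of $H$ is polynomial in $\tau_2$ and smooth in $\theta_2$.

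To conclude I would run the uniqueness argument already used for Proposition~\ref{prop:uniquenessGamma}. Put $\delta = G - H\in\C^\W$; by linearity $L\,\delta_w = (\delta\star B(\theta_2))_w$ for every $w$, and from the definitions of $\star$ and of $B(\theta_2)$ (which is supported on one-letter words) one has, for $w=\bk_1\cdots\bk_n$,
$$
(\delta\star B(\theta_2))_{\bk_1\cdots\bk_n} = \delta_{\bk_1\cdots\bk_{n-1}}\,e^{i\bk_n\cdot\theta_2},
$$
so $L\,\delta_w$ depends only on the value of $\delta$ at the word obtained by deleting the last letter of $w$. Arguing by induction on the length of $w$: the inductive hypothesis makes $L\,\delta_w\equiv 0$, while $\delta_w(0,\theta_1)=0$ and $\delta_w$ is polynomial in $\tau_2$, so Lemma~\ref{l:z} (with $\theta_1$ playing the role of $\theta_0$) forces $\delta_w\equiv 0$. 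Hence $G\equiv H$ on $\R\times\T^d$, and since $\tau_1$, $\theta_0$, $\theta_1$ were arbitrary the theorem follows.

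I do not expect a genuinely hard step; the one thing to get right is the choice of which arguments to hold fixed. Trying to vary $(\tau_1,\theta_1)$ would not work cleanly, because $\theta_1$ also appears as the base point of the second factor, so (\ref{eq:transport}) would not apply to the left-hand side as a whole. Freezing $\theta_1$ and letting $(\tau_2,\theta_2)$ vary is exactly what makes the left factor constant and lets the transport equation and Lemma~\ref{l:z} do all the work.
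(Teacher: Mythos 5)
Your proof is correct and follows essentially the same route as the paper: both sides are shown to satisfy the transport equation (\ref{eq:transport}) with the same data at a single point and the same polynomial-in-$\tau$/smooth-in-$\theta$ regularity, and uniqueness is then obtained by the induction-on-word-length argument of Proposition~\ref{prop:uniquenessGamma} via Lemma~\ref{l:z}. The only difference is cosmetic — you parametrize by $\tau_2$ where the paper uses $\tau=\tau_1+\tau_2$ — and you usefully spell out details (the Leibniz rule for $\star$, the form of $\delta\star B(\theta_2)$) that the paper leaves implicit.
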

\begin{proof}
 One can check that both
 $\gamma(\tau,\theta)=\Gamma(\tau_1,\theta_1; \theta_0) \star \Gamma(\tau-\tau_1,\theta;\theta_1)$
  and
  $\gamma(\tau,\theta)=\Gamma(\tau,\theta;\theta_0)$ satisfy the following three conditions:
 \begin{itemize}
\item for each $w \in \W$, $\gamma_w(\tau,\theta)$ is smooth in $\theta$ and depends polynomially on $\tau$,
\item $ \gamma(\tau_1,\theta_1)=\Gamma(\tau_1,\theta_1; \theta_0)$, and
\item for all $\tau \in \R$ and all $\theta \in \T^d$,
 \begin{equation*}
\frac{\partial}{\partial \tau} \gamma(\tau, \theta) +
 \omega\cdot \nabla_\theta  \gamma(\tau, \theta)  = \gamma(\tau, \theta)\star B(\theta).
 \end{equation*}
\end{itemize}
Proceeding as in the proof of Proposition~\ref{prop:uniquenessGamma}, one concludes that there is a unique $\gamma(\tau,\theta)$ satisfying the three conditions above.  The required result is thus obtained by setting $\tau=\tau_1+\tau_2$ and $\theta=\theta_2$.
\qed
\end{proof}

In particular
\begin{eqnarray*}
\Gamma(\tau_1, \theta_0; \theta_0) \star \Gamma(\tau_2, \theta_0; \theta_0) &=& \Gamma(\tau_1 + \tau_2, \theta_0; \theta_0),
  \\
  \Gamma(0, \theta_1; \theta_0) \star \Gamma(0, \theta_2; \theta_1)
 &=& \Gamma(0, \theta_2; \theta_0),
\end{eqnarray*}
for arbitrary $\tau_1,\tau_2\in \R$, $\theta_0,\theta_1,\theta_2 \in \T^d$. The first of these identities shows that, as $\tau$ varies with $\theta_0$ fixed, the elements
$\Gamma(\tau,\theta_0;\theta_0)$ form a one-parameter subgroup of $\G$. The second identity is similar to what is sometimes called two-parameter group property of the solution operator of nonautonomous differential equations.
Of course these identities reflect the fact that the transport equation is autonomous in $\tau$ and nonautonomous in $\theta$.

\subsection{The averaged system and the change of variables}

Before we average the oscillatory problem (\ref{eq:odey}), we shall do so with the corresponding universal problem (\ref{eq:univedo}) in $\G$, whose solution $\alpha$ has been represented in (\ref{eq:alphaGamma}) by means of the auxiliary function $\Gamma(\tau,\theta;\theta_0)$. Note that  the oscillatory nature of $\alpha$ is caused by the
second argument in $\Gamma$  (each $\Gamma_w$, $w\in\W$, is a polynomial $\tau$).
Consider then the $\G$-valued function of $t$ defined by
\begin{equation}\label{eq:alfabargamma}\bar\alpha(t;t_0) = \Gamma(t-t_0, t_0\omega;t_0\omega),\end{equation}
 where the second argument in $\Gamma$ has been frozen at its initial value. This satisfies $\bar\alpha(t_0;t_0) = \Gamma(0,t_0\omega;t_0\omega)$, or,  from Proposition~\ref{prop:uniquenessGamma}, $\bar\alpha(t_0;t_0)= \uno$ so that $\bar\alpha(t;t_0)$ coincides with $\alpha(t;t_0)$ at the initial time $t=t_0$. Furthermore, due to the trigonometric dependence on $\theta$, if $d=1$ (periodic case), $\bar\alpha(t;t_0)=\Gamma(t-t_0, t_0\omega;t_0\omega)$ coincides with $\alpha(t;t_0)=\Gamma(t-t_0, t\omega;t_0\omega)$ at all times of the form $t = t_0+2k\pi/\omega$, $k$ integer. If $d>1$ (quasiperiodic case), as $t$ varies, the point $t\omega\in\T^d$ never returns to the initial position $t_0\omega$; however it returns infinitely often to the neighborhood of
$t_0\omega$. To sum up, the nonoscillatory $\bar\alpha(t;t_0)$ is a good description of the long-term evolution of $\alpha(t;t_0)$ and, in fact, we shall presently arrange things in such a way that $\bar\alpha(t;t_0)$ is the solution of the averaged version of the problem (\ref{eq:univedo}).

Having identified the averaged {\em solution,} let us find the averaged {\em problem.} From Theorem~\ref{th:gammaproperties}, for each $\tau_1$ and $\tau_2$,
$$
\bar\alpha(t_0+\tau_1+\tau_2;t_0) = \bar\alpha(t_0+\tau_1;t_0)\star\bar\alpha(t_0+\tau_2;t_0)
$$
so that, as $\tau$ varies, the elements $\bar\alpha(t_0+\tau;t_0)$ form a (commutative) one-parameter group $\subset \G$. Therefore $\bar\alpha(t;t_0)$ is the solution of the {\em autonomous problem}
\begin{equation}\label{eq:univaver}
\frac{d}{dt}  \bar \alpha(t;t_0) = \bar\alpha(t;t_0)\star \bar\beta(t_0),\qquad \bar\alpha(t_0;t_0) = \uno,
\end{equation}
with
\begin{equation}\label{eq:betabar}
\bar\beta(t_0) = \left. \frac{d}{dt} \bar\alpha(t;t_0)\right|_{t=t_0}.
\end{equation}
For completeness we include here a proof of this fact, which is well known in the theory of differential equations,
$$
\frac{d}{dt^\prime} \bar\alpha(t^\prime;t_0) = \left. \frac{d}{dt} \bar\alpha(t^\prime+t-t_0;t_0)\right|_{t=t_0} =
\left. \frac{d}{dt} \bar\alpha(t^\prime;t_0)\star\bar\alpha(t;t_0)\right|_{t=t_0}=\bar\alpha(t^\prime;t_0)\star\bar\beta(t_0).
$$
Note that (\ref{eq:betabar}) implies that $\bar\beta(t_0) \in\g$.

After having found the averaged problem (\ref{eq:univaver}), we invoke once more Theorem~\ref{th:gammaproperties} and write
\begin{eqnarray*}
\alpha(t;t_0) &=& \Gamma(t-t_0,t\omega;t_0\omega)\\&  = &\Gamma(t-t_0,t_0\omega;t_0\omega) \star \Gamma(0,t\omega;t_0\omega)\\&=&\bar\alpha(t;t_0)\star \Gamma(0,t\omega;t_0\omega).
\end{eqnarray*}
Thus, if we define
\begin{equation}\label{eq:kappa}
\kappa(\theta;t_0)=\Gamma(0,\theta;t_0\omega),
\end{equation}
then $\kappa$ depends periodically on the components of $\theta$  and $\kappa(t\omega;t_0)$ relates the averaged solution $\bar\alpha$ and the oscillatory solution $\alpha$ in the following way:
\begin{equation}\label{eq:dosalfas}
\alpha(t;t_0) = \bar\alpha(t;t_0)\star\kappa(t\omega;t_0).
\end{equation}
To sum up, we have proved:
\begin{theorem} For $\theta\in\T^d$ define $\kappa(\theta;t_0) \in \G$ by (\ref{eq:kappa}). Then the solution of the problem (\ref{eq:univedo}) has the representation
(\ref{eq:dosalfas}), where $\bar \alpha(t;t_0)$ satisfies the autonomous (averaged) initial value problem (\ref{eq:univaver})--(\ref{eq:betabar}) with $\bar\beta(t_0) \in\g$. Furthermore $\bar \alpha(t;t_0)$ may be found by
means of (\ref{eq:alfabargamma}).
\end{theorem}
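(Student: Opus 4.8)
The statement gathers, in one place, three facts about the $\G$-valued curve $\bar\alpha$ defined by (\ref{eq:alfabargamma}): that the associated periodic factor $\kappa$ of (\ref{eq:kappa}) lies in $\G$, that $\alpha$ factors as in (\ref{eq:dosalfas}), and that $\bar\alpha$ solves the autonomous problem (\ref{eq:univaver})--(\ref{eq:betabar}) with $\bar\beta(t_0)\in\g$. The plan is therefore not to prove anything new but to instantiate Theorems~\ref{th:GammainG} and~\ref{th:gammaproperties} and Proposition~\ref{prop:uniquenessGamma} at the appropriate arguments and read off the conclusions.

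First I would fix $t_0$, recall $\bar\alpha(t;t_0)=\Gamma(t-t_0,t_0\omega;t_0\omega)$ and $\kappa(\theta;t_0)=\Gamma(0,\theta;t_0\omega)$ by definition, and invoke Theorem~\ref{th:GammainG} to get $\bar\alpha(t;t_0)\in\G$ and $\kappa(\theta;t_0)\in\G$ for all relevant arguments. For the factorization I would apply Theorem~\ref{th:gammaproperties} with $\tau_1=t-t_0$, $\tau_2=0$, $\theta_0=\theta_1=t_0\omega$, $\theta_2=t\omega$, which gives
$$
\Gamma(t-t_0,t_0\omega;t_0\omega)\star\Gamma(0,t\omega;t_0\omega)=\Gamma(t-t_0,t\omega;t_0\omega),
$$
i.e.\ $\bar\alpha(t;t_0)\star\kappa(t\omega;t_0)=\alpha(t;t_0)$, since by (\ref{eq:alphaGamma}) the right-hand side is exactly the solution of (\ref{eq:univedo}). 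Evaluating at $t=t_0$ and using $\Gamma(0,\theta_0;\theta_0)=\uno$ from Proposition~\ref{prop:uniquenessGamma} shows $\bar\alpha(t_0;t_0)=\uno$ and $\kappa(t_0\omega;t_0)=\uno$, so the factorization is consistent with the initial condition.

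Next I would show $\bar\alpha$ solves the averaged problem. Applying Theorem~\ref{th:gammaproperties} again, now with all three angles equal to $t_0\omega$ and $\tau_1,\tau_2$ arbitrary, yields
$$
\bar\alpha(t_0+\tau_1;t_0)\star\bar\alpha(t_0+\tau_2;t_0)=\bar\alpha(t_0+\tau_1+\tau_2;t_0),
$$
so $\tau\mapsto\bar\alpha(t_0+\tau;t_0)$ is a commutative one-parameter subgroup of $\G$ through $\uno$. Differentiating this identity with respect to $\tau_1$ at $\tau_1=0$, by the standard one-parameter-group argument reproduced in the discussion above, gives $(d/dt)\bar\alpha(t;t_0)=\bar\alpha(t;t_0)\star\bar\beta(t_0)$ with $\bar\beta(t_0)$ as in (\ref{eq:betabar}); and since $\bar\alpha(\cdot;t_0)$ is a smooth curve in $\G$ with $\bar\alpha(t_0;t_0)=\uno$, its velocity $\bar\beta(t_0)$ at $t_0$ is an infinitesimal character, i.e.\ $\bar\beta(t_0)\in\g$, by the characterization of $\g$ recalled in Section~2. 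That $\bar\alpha$ is computed via (\ref{eq:alfabargamma}) is its definition, made effective by the recursions (\ref{eq:Gamma1-5}).

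I do not expect a substantive obstacle: every ingredient is already available. The only points needing a little care are (i) that the degenerate choice $\tau_2=0$ is legitimate in Theorem~\ref{th:gammaproperties} — it is, as that result is stated for arbitrary $\tau_1,\tau_2\in\R$ — and (ii) the passage from $\bar\beta(t_0)\in\C^\W$ to $\bar\beta(t_0)\in\g$, which rests on identifying $\g$ with the tangent space of $\G$ at $\uno$; should one prefer to avoid that identification, one can instead differentiate the shuffle relations defining $\G$, satisfied by $\bar\alpha(t;t_0)$, at $t=t_0$ to obtain directly the shuffle relations defining $\g$.
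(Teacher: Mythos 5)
Your proposal is correct and follows essentially the same route as the paper, whose proof of this theorem is precisely the preceding discussion: define $\bar\alpha$ by (\ref{eq:alfabargamma}), apply Theorem~\ref{th:gammaproperties} with $\tau_2=0$ to get the factorization (\ref{eq:dosalfas}), apply it again with all angles equal to $t_0\omega$ to get the one-parameter group property and hence (\ref{eq:univaver})--(\ref{eq:betabar}), and read off $\bar\beta(t_0)\in\g$ from the tangent-space characterization of $\g$. (The only microscopic quibble: differentiating the group law in $\tau_1$ at $\tau_1=0$ gives $\bar\beta\star\bar\alpha$ rather than $\bar\alpha\star\bar\beta$; the paper differentiates in the second factor, though commutativity of the subgroup makes the two equivalent.)
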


By inserting the word basis functions to obtain the corresponding series and recalling that the operation $\star$ for the coefficients represents the composition of the series, we conclude:
\begin{theorem} With the notation of the preceding theorem, the solution of (\ref{eq:odey}) may be represented as
$$
y(t) = W_{\kappa(t\omega;t_0)}(Y(t))
$$
where $Y(t) = W_{\bar\alpha(t;t_0)}(y_0)$ solves the autonomous (averaged) initial value problem
$$
\frac{d}{dt} Y = W_{\bar\beta(t_0)}(Y),\qquad Y(t_0) = y_0.
$$
\end{theorem}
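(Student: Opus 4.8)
The plan is to deduce the statement from the coefficient-level facts already proved, by inserting the word basis functions $f_w$ and using that the convolution product $\star$ on coefficients represents composition of the associated series. The ingredients are: the word series form $y(t)=W_{\alpha(t;t_0)}(y_0)$ of the solution of (\ref{eq:odey}), which follows from (\ref{eq:ws1}) with the data (\ref{eq:lambdaexp}); the factorization (\ref{eq:dosalfas}), namely $\alpha(t;t_0)=\bar\alpha(t;t_0)\star\kappa(t\omega;t_0)$; the substitution rule (\ref{eq:act}), valid whenever the inner coefficient sequence belongs to $\G$; and the facts, provided by Theorem~\ref{th:GammainG} together with the definitions (\ref{eq:alfabargamma}) and (\ref{eq:kappa}), that $\bar\alpha(t;t_0)\in\G$ and $\kappa(t\omega;t_0)\in\G$ for every $t$.

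First I would derive the representation of $y(t)$. Starting from $y(t)=W_{\alpha(t;t_0)}(y_0)$ and substituting (\ref{eq:dosalfas}) one gets $y(t)=W_{\bar\alpha(t;t_0)\star\kappa(t\omega;t_0)}(y_0)$; applying (\ref{eq:act}) with $\gamma=\bar\alpha(t;t_0)\in\G$ and $\delta=\kappa(t\omega;t_0)$ then gives
\[
y(t)=W_{\kappa(t\omega;t_0)}\big(W_{\bar\alpha(t;t_0)}(y_0)\big)=W_{\kappa(t\omega;t_0)}(Y(t)),
\]
where $Y(t)=W_{\bar\alpha(t;t_0)}(y_0)$, which is the asserted relation.

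Next I would check that $Y$ solves the autonomous averaged problem. Since $t$ enters the formal series $W_{\bar\alpha(t;t_0)}(y_0)=\sum_{w\in\W}\bar\alpha_w(t;t_0)\,f_w(y_0)$ only through the scalar coefficients, it may be differentiated term by term, and combining this with the autonomous equation (\ref{eq:univaver}) for $\bar\alpha$ gives
\[
\frac{d}{dt}Y(t)=\sum_{w\in\W}\Big(\frac{d}{dt}\bar\alpha_w(t;t_0)\Big)f_w(y_0)=W_{\bar\alpha(t;t_0)\star\bar\beta(t_0)}(y_0).
\]
A further use of (\ref{eq:act}), now with $\gamma=\bar\alpha(t;t_0)\in\G$ and $\delta=\bar\beta(t_0)$, yields
\[
\frac{d}{dt}Y(t)=W_{\bar\beta(t_0)}\big(W_{\bar\alpha(t;t_0)}(y_0)\big)=W_{\bar\beta(t_0)}(Y(t)),
\]
while $Y(t_0)=W_{\bar\alpha(t_0;t_0)}(y_0)=W_{\uno}(y_0)=y_0$ because $\bar\alpha(t_0;t_0)=\uno$.

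I do not expect a genuine obstacle here: the theorem is essentially a transcription, at the level of series of maps, of the preceding coefficient-level theorem. The only points requiring care are that the substitution rule (\ref{eq:act}) is invoked solely with the factor $\bar\alpha(t;t_0)$ --- which lies in the group $\G$ --- as the inner series, the outer coefficient sequences $\kappa(t\omega;t_0)$ and $\bar\beta(t_0)$ being arbitrary elements of $\C^\W$; and that the term-by-term differentiation in $t$ is legitimate precisely because $t$ occurs only in the universal coefficients, not in the word basis functions $f_w$.
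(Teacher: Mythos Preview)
Your proposal is correct and follows precisely the approach indicated in the paper, which derives this theorem from the preceding coefficient-level theorem ``by inserting the word basis functions to obtain the corresponding series and recalling that the operation $\star$ for the coefficients represents the composition of the series.'' You have simply spelled out in detail the applications of (\ref{eq:act}) and the termwise differentiation that this sentence encapsulates, correctly noting that $\bar\alpha(t;t_0)\in\G$ is what licenses each use of the substitution rule.
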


\subsection{Geometric properties}

Since $\bar \beta(t_0)$ is in the Lie algebra $\g$, the
Dynkin--Specht--Wever theorem
\cite{jacobson},  implies that the word series for the averaged vector field may be rewritten in terms of iterated Lie-Jacobi brackets
\begin{equation}\label{eq:Bserieshg}
W_{\bar \beta(t_0)}(y) = \sum_{r= 1}^\infty \,
\sum_{\bk_1,\ldots,\bk_r \in Z^d}\,
\frac{\epsilon^r}{r} \, \beta_{\bk_1\cdots \bk_r}(t_0) \,
 [[ \cdots [[f_{\bk_1},f_{\bk_2}],f_{\bk_3}]\cdots ],f_{\bk_r} ](y).
\end{equation}
(The  bracket is defined by
$[f,g](y)=g^\prime(x)f(y) - f^\prime(y)g(y)$.)

It follows from (\ref{eq:Bserieshg}) that if all the $f_\bk$ belong to a given Lie subalgebra of the Lie algebra of all vector fields (e.g., if they are all Hamiltonian or all divergence free), then the averaged vector field will also lie in that subalgebra (i.e., will be Hamilonian or divergence free).

Additionally, the averaging procedure described above is {\em equivariant} with respect to arbitrary changes of variables: changing  variables $y=C(\bar y)$ in the oscillatory problem, followed by averaging, yields the same result as changing variables in the averaged system. This is a consequence of the equivariance of word series with coefficients in $\G$.

\subsection{Finding the coefficients}

From (\ref{eq:alfabargamma}), (respectively (\ref{eq:kappa})) the quantities $\bar\alpha_w(t;t_0)$ (respectively $\kappa_w(\theta;t_0)$), $w\in\W$, may be found recursively by setting $\tau = t-t_0$, $\theta = \theta_0 = t_0\omega$ (respectively $\tau = 0$, $\theta_0 = t_0\omega$) in the formulas for $\Gamma_w(\tau,\theta;\theta_0)$ provided in Theorem~\ref{th:Gamma}. The following recurrences for $\bar\beta(t_0)$ are easily found via (\ref{eq:betabar}).

\begin{theorem}\label{th:bbeta}
Given  $r\geq 1$, $\bk\in \Z^d\backslash\{0\}$,  and $\bl_1,\ldots,\bl_s \in \Z^d$,
\begin{eqnarray*}
\bar \beta_{\bk}(t_0)&=& 0, \\
\bar \beta_{\bf 0}(t_0) &=& 1, \\
\bar \beta_{\bf 0^{r+1}}(t_0) &=& 0, \\
\bar \beta_{{\bf 0}^r \bk}(t_0) &=& \displaystyle\frac{i}{\bk\cdot \omega}( \bar
\beta_{ {\bf 0}^{r-1}\bk}(t_0) -  \bar \beta_{{\bf 0}^{r}}(t_0)e^{i\bk\cdot \omega t_0}),\\
\bar \beta_{\bk \bl_1\cdots \bl_s}(t_0) &=& \displaystyle\frac{i}{\bk\cdot \omega}(e^{i\bk\cdot \omega t_0}
\bar \beta_{\bl_1\cdots \bl_s}(t_0)- \bar \beta_{(\bk+\bl_1) \bl_2\cdots \bl_s}(t_0)),\\
\bar \beta_{{\bf 0}^r \bk \bl_1\cdots \bl_s}(t_0) &=& \displaystyle\frac{i}{\bk\cdot \omega}(
\bar \beta_{{\bf 0}^{r-1} \bk \bl_1\cdots \bl_s}(t_0) -  \bar \beta_{{\bf 0}^{r} (\bk+\bl_1) \bl_2\cdots \bl_s}(t_0)).\\
\end{eqnarray*}
\end{theorem}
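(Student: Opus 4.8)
The plan is to obtain the recursions for $\bar\beta(t_0)$ directly from the recursions for $\Gamma$ in Theorem~\ref{th:Gamma}, simply by differentiation. The starting observation is that, by (\ref{eq:alfabargamma}), $\bar\alpha_w(t;t_0)=\Gamma_w(t-t_0,t_0\omega;t_0\omega)$ for every $w\in\W$, so that (\ref{eq:betabar}) and the chain rule give
\[
\bar\beta_w(t_0)=\left.\frac{\partial}{\partial\tau}\,\Gamma_w(\tau,t_0\omega;t_0\omega)\right|_{\tau=0}.
\]
Thus the entire statement reduces to applying the operator $(\partial/\partial\tau)|_{\tau=0}$, followed by the substitution $\theta=\theta_0=t_0\omega$, to each of the five identities in (\ref{eq:Gamma1-5}).

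First I would dispatch the base cases. From $\Gamma_\emptyset\equiv1$ one gets $\bar\beta_\emptyset(t_0)=0$ (consistent with $\bar\beta(t_0)\in\g$); the function $\Gamma_{\bk}(\tau,\theta;\theta_0)=\frac{i}{\bk\cdot\omega}(e^{i\bk\cdot\theta_0}-e^{i\bk\cdot\theta})$ does not depend on $\tau$, whence $\bar\beta_{\bk}(t_0)=0$; and from $\Gamma_{\zero^r}(\tau,\theta;\theta_0)=\tau^r/r!$ one reads off $\bar\beta_{\zero}(t_0)=1$ and $\bar\beta_{\zero^{r+1}}(t_0)=0$ for $r\geq1$, since $\partial_\tau(\tau^r/r!)|_{\tau=0}$ equals $1$ when $r=1$ and $0$ when $r\geq2$. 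For the three recursive families I would differentiate term by term, using that (i) the exponential factors $e^{i\bk\cdot\theta}$ and $e^{i\bk\cdot\theta_0}$ are independent of $\tau$ and become $e^{i\bk\cdot\omega t_0}$ after $\theta=\theta_0=t_0\omega$, and (ii) $\partial_\tau\Gamma_v(\tau,\theta;\theta_0)|_{\tau=0,\ \theta=\theta_0=t_0\omega}=\bar\beta_v(t_0)$ for the shorter words $v$ occurring on the right-hand side. Each of the Leibniz differentiations of $\Gamma_{\zero^r\bk}$, $\Gamma_{\bk\bl_1\cdots\bl_s}$ and $\Gamma_{\zero^r\bk\bl_1\cdots\bl_s}$ then collapses to exactly the corresponding asserted formula. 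To close, I would note that every nonempty word has exactly one of the forms $\zero^r$, $\bk$, $\bk\bl_1\cdots\bl_s$, $\zero^r\bk$ or $\zero^r\bk\bl_1\cdots\bl_s$ with $r\geq1$, $s\geq1$ and $\bk\neq\zero$, so the displayed list is exhaustive.

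There is no substantive obstacle here: the content is a one-line identity together with routine differentiation. The single point deserving a word of justification is the interchange of $(\partial/\partial\tau)|_{\tau=0}$ with the evaluation $\theta=\theta_0=t_0\omega$; this is legitimate because, by Theorem~\ref{th:Gamma}, each $\Gamma_w$ is a polynomial in $\tau$ whose coefficients are trigonometric polynomials in $\theta$ and $\theta_0$, hence jointly smooth, so the two operations commute. A secondary, purely bookkeeping, matter is the exhaustiveness of the five word-forms above; alternatively, one may phrase the argument as an induction on word length, in which case this is automatic.
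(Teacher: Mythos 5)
Your proof is correct and follows exactly the route the paper intends: the paper gives no written proof beyond the remark that the recurrences are "easily found via (\ref{eq:betabar})" after setting $\tau=t-t_0$, $\theta=\theta_0=t_0\omega$ in Theorem~\ref{th:Gamma}, which is precisely your computation of $\bar\beta_w(t_0)=\partial_\tau\Gamma_w(\tau,t_0\omega;t_0\omega)|_{\tau=0}$ applied to each line of (\ref{eq:Gamma1-5}). Your added remarks on the exhaustiveness of the five word forms and on the smoothness of the $\Gamma_w$ are correct and harmless.
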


In the particular case $t_0=0$, after computing the coefficients $\bar \beta_w(0)$ for words with $\leq 3$ letters by means of the formulas in the theorem, we obtain, with the help of the Jacobi identity for the bracket and the shuffle relations, the following explicit formula for the averaged system:
\begin{equation*}
\frac{d}{dt} Y = \epsilon  f_{\zero} +
\epsilon^2  F_2 + \epsilon^3 F_3 + \mathcal{O}(\epsilon^4),
\end{equation*}
where
\begin{eqnarray*}
F_2 &=& \sum_{\bk  > -\bk} \frac{i}{\bk \cdot \omega}
([f_{\bk}-f_{-\bk},f_{\zero}]  + [f_{-\bk},f_{\bk}]),\\
F_3 &=&  \sum_{\bk \neq \zero}\frac{1}{(\bk \cdot \omega)^2}
\left([f_\zero,[f_\zero,f_\bk]] + [f_\bk,[f_\bk,f_{-\bk}]] - \frac{1}{2} [f_\bk,[f_\bk,f_{-2\bk}]] + [f_{-\bk},[f_\bk,f_{\zero}]] \right) \\
&& + \sum_{\zero \neq \bm \neq -\bl \neq \zero}
\frac{-1}{(\bl \cdot \omega) ((\bm + \bl)\cdot \omega)}
\,  [f_\bm,[f_\bl,f_\zero]]\\
&&+  \sum_{-\bl > \bk < \bl, \ \bk \neq \zero }
\frac{1}{(\bk \cdot \omega) (\bl\cdot \omega)}
\,  [f_{-\bl},[f_\bl,f_\bk]] \\
&&+  \sum_{\substack{\bm> \bk < -\bk\\  \bm+\bk \neq \zero }}
\frac{-1}{(\bk \cdot \omega) (\bm\cdot \omega)}
\,  [f_{\bm},[f_{-\bk},f_\bk]] \\
 && +\sum_{\substack{\zero \neq \bm \neq \pm \bl \neq \zero \\
 \bm > -\bm-\bl < \bl}}  \frac{-1}{(\bm \cdot \omega) ((\bm+\bl)\cdot \omega)}
\,  [f_{\bm},[f_{\bl},f_{-\bm-\bl}]].
\end{eqnarray*}
In these formulas  $<$ is some total ordering in the set of multi-indices $\Z^d$ such that $\bk > \zero$ for $\bk\neq \zero$.
\subsection{Changing the initial time}

There would have been no loss of generality if in \eqref{eq:odey} we had taken the initial time $t_0$ to be 0, as the general case may be reduced to the case where $t_0=0$ by a change of variables $t\rightarrow t'+t_0$. Here we give formulas that  express $\Gamma(\tau,\theta; \theta_0)$ in terms of $\Gamma(\tau,\theta-\theta_0;  0)$ and therefore allows one to express the coefficients $\alpha(t;t_0)$, $\bar \beta(t_0)$, \dots in terms of the coefficients $\alpha(t;0)$, $\bar \beta(0)$, \dots

We introduce, for each $\theta \in \T^d$,  the linear map $\Xi_{\theta}: \C^\W \to \C^\W$  defined as follows: given $\delta \in \C^\W$, $(\Xi_{\theta} \delta)_{\emptyset} = \delta_{\emptyset}$, and  for each word $w=\bk_1 \cdots \bk_n$ with $n>0$ letters,
\begin{equation*}
(\Xi_{\theta} \delta)_{\bk_1 \cdots \bk_n} = e^{i (\bk_1+ \cdots +\bk_n) \cdot \theta} \delta_{\bk_1 \cdots \bk_n}.
\end{equation*}
Note that $\Xi_\theta$ is actually an algebra automorphism, as it preserves the convolution product: $\Xi_{\theta}(\delta \star \delta^\prime) = (\Xi_{\theta} \delta) \star (\Xi_{\theta } \delta^\prime)$, if $\delta, \delta^\prime\in \C^{\W}$. In addition it maps $\G$ into $\G$.

The following result  may be proved by induction on the number of letters using the recursive formulas (\ref{eq:Gamma1-5}), or, alternatively, by using the transport equation.

\begin{proposition}
For each $\tau \in \R$ and $\theta,\theta_0 \in \T^d$,
\begin{equation*}
\Gamma(\tau,\theta; \theta_0) = \Xi_{\theta_0} \Gamma(\tau,\theta-\theta_0;  0).
\end{equation*}
\end{proposition}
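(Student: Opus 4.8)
The plan is to invoke the uniqueness part of Proposition~\ref{prop:uniquenessGamma}: if I can exhibit the right-hand side of the claimed identity as a solution of the transport problem (\ref{eq:transport}) whose word-coefficients are smooth in $\theta$ and polynomial in $\tau$, then it must coincide with $\Gamma(\tau,\theta;\theta_0)$. Accordingly I would set $\widetilde\Gamma(\tau,\theta;\theta_0):=\Xi_{\theta_0}\Gamma(\tau,\theta-\theta_0;0)$. For fixed $\theta_0$ the map $\Xi_{\theta_0}$ merely multiplies the coefficient of each word by a constant unimodular scalar, and $\theta\mapsto\Gamma(\tau,\theta-\theta_0;0)$ is a rigid translation of a map which, by Theorem~\ref{th:Gamma}, is already a trigonometric polynomial in its angular argument and a polynomial in $\tau$; hence every $\widetilde\Gamma_w$ has the required regularity. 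The initial condition is immediate: $\widetilde\Gamma(0,\theta_0;\theta_0)=\Xi_{\theta_0}\Gamma(0,0;0)=\Xi_{\theta_0}\uno=\uno$, since $\Xi_{\theta_0}$ preserves the empty-word coefficient and scales the (vanishing) coefficients of all nonempty words.

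The substantive point is to verify the transport partial differential equation for $\widetilde\Gamma$. Putting $\phi=\theta-\theta_0$, one has $\nabla_\theta=\nabla_\phi$, and $\Xi_{\theta_0}$ depends on neither $\tau$ nor $\theta$, so
\begin{equation*}
\Big(\frac{\partial}{\partial\tau}+\omega\cdot\nabla_\theta\Big)\widetilde\Gamma(\tau,\theta;\theta_0)
=\Xi_{\theta_0}\Big[\Big(\frac{\partial}{\partial\tau}+\omega\cdot\nabla_\phi\Big)\Gamma(\tau,\phi;0)\Big]
=\Xi_{\theta_0}\big(\Gamma(\tau,\phi;0)\star B(\phi)\big),
\end{equation*}
the last equality being (\ref{eq:transport}) with base point $0$, valid for all $(\tau,\phi)$ by the remark immediately following (\ref{eq:transport}). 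Since $\Xi_{\theta_0}$ is an algebra automorphism for $\star$ (as recalled just before the proposition), the right-hand side equals $\big(\Xi_{\theta_0}\Gamma(\tau,\phi;0)\big)\star\big(\Xi_{\theta_0}B(\phi)\big)=\widetilde\Gamma(\tau,\theta;\theta_0)\star\big(\Xi_{\theta_0}B(\theta-\theta_0)\big)$. It then remains only to observe that $\Xi_{\theta_0}B(\theta-\theta_0)=B(\theta)$: both elements are supported on one-letter words, and on the letter $\bk$ the left-hand side is $e^{i\bk\cdot\theta_0}e^{i\bk\cdot(\theta-\theta_0)}=e^{i\bk\cdot\theta}=B_\bk(\theta)$. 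Thus $\widetilde\Gamma$ solves (\ref{eq:transport}), and Proposition~\ref{prop:uniquenessGamma} yields $\widetilde\Gamma=\Gamma$.

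I expect that nothing here is a genuine obstacle once the setup is in place; the only thing needing care is the bookkeeping against the hypotheses of Proposition~\ref{prop:uniquenessGamma} — that the initial datum is prescribed at the single point $\theta=\theta_0$, so that $\widetilde\Gamma(0,\theta_0;\theta_0)=\uno$ is exactly the right normalization, and that the polynomial-in-$\tau$ property survives the translation $\theta\mapsto\theta-\theta_0$ and the action of $\Xi_{\theta_0}$. If one prefers to avoid the uniqueness theorem, the identity can be proved directly by induction on the number of letters of $w$, feeding the recursions (\ref{eq:Gamma1-5}) into the definition of $\Xi_{\theta_0}$; there the point to monitor — and the only mildly tedious part, since all five cases (including degeneracies such as $\bk+\bl_1=\zero$) must be checked — is that the explicit prefactor $e^{i\bk\cdot\theta_0}$ occurring in the $\bk$-headed recursions combines with the inductive factor $e^{i(\bl_1+\cdots+\bl_s)\cdot\theta_0}$ carried by the shorter words to produce precisely the factor $e^{i(\bk_1+\cdots+\bk_n)\cdot\theta_0}$ demanded by $\Xi_{\theta_0}$.
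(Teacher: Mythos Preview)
Your proof is correct and follows precisely the two routes the paper itself indicates (the paper gives no detailed argument, merely stating that the result ``may be proved by induction on the number of letters using the recursive formulas \eqref{eq:Gamma1-5}, or, alternatively, by using the transport equation''). Your primary argument via the transport equation and the uniqueness statement of Proposition~\ref{prop:uniquenessGamma} is exactly the second of these, and your closing paragraph sketches the first.
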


As a consequence we have (cf.\ Theorem~\ref{th:gammaproperties}):

\begin{corollary}
For arbitrary $\tau_1,\tau_2 \in \R$ and $\theta_1,\theta_2 \in \T^d$,
\begin{equation*}
 \Gamma(\tau_1, \theta_1; 0) \star
\Xi_{\theta_1} \Gamma(\tau_2, \theta_2; 0)=\Gamma(\tau_1+\tau_2, \theta_1 + \theta_2; 0).
\end{equation*}
\end{corollary}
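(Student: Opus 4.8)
The plan is to derive the identity directly from two results already established: the two-parameter group property of $\Gamma$ (Theorem~\ref{th:gammaproperties}) and the translation identity $\Gamma(\tau,\theta;\theta_0)=\Xi_{\theta_0}\Gamma(\tau,\theta-\theta_0;0)$ of the immediately preceding proposition. Both hold for all real times and all angles, so the whole argument is a short substitution and introduces no new restrictions on $\tau_1,\tau_2,\theta_1,\theta_2$.

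First I would apply Theorem~\ref{th:gammaproperties} with base angle $\theta_0=0$, intermediate angle $\theta_1$, final angle $\theta_1+\theta_2$, and time increments $\tau_1$ and $\tau_2$, obtaining
$$\Gamma(\tau_1+\tau_2,\theta_1+\theta_2;0)=\Gamma(\tau_1,\theta_1;0)\star\Gamma(\tau_2,\theta_1+\theta_2;\theta_1).$$
Then I would rewrite the second factor on the right using the preceding proposition with $\theta_0=\theta_1$ and $\theta=\theta_1+\theta_2$:
$$\Gamma(\tau_2,\theta_1+\theta_2;\theta_1)=\Xi_{\theta_1}\Gamma(\tau_2,(\theta_1+\theta_2)-\theta_1;0)=\Xi_{\theta_1}\Gamma(\tau_2,\theta_2;0).$$
Substituting this back into the previous display produces exactly the asserted identity.

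The hard part is essentially nonexistent; the only thing needing care is the bookkeeping of which angle plays the role of ``initial'', ``intermediate'' and ``final'' argument in Theorem~\ref{th:gammaproperties}, and making sure the shift $\theta\mapsto\theta-\theta_0$ of the proposition is applied with $\theta_0=\theta_1$ (not $\theta_2$). As an alternative one could give a self-contained argument from first principles: verify, by induction on the number of letters using the recursions (\ref{eq:Gamma1-5}) and the definition of $\Xi_\theta$ (or, more slickly, using the transport equation (\ref{eq:transport})), that both sides of the claimed identity, regarded as $\C^\W$-valued functions of $(\tau_2,\theta_2)$, solve the same transport problem with $\theta_0$ frozen at $0$ and agree at $(\tau_2,\theta_2)=(0,0)$ --- here one uses that $\Xi_{\theta_1}$ is an algebra automorphism for $\star$ mapping $\G$ into $\G$, so the left-hand side is again a $\G$-valued curve whose coefficients are polynomial in $\tau$ --- and then invoke the uniqueness of Proposition~\ref{prop:uniquenessGamma}. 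Since Theorem~\ref{th:gammaproperties} and the preceding proposition are already available, however, the two-line derivation above is the natural route.
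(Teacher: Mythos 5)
Your derivation is correct and is precisely the argument the paper intends: the corollary is stated as an immediate consequence of the preceding proposition combined with Theorem~\ref{th:gammaproperties}, and your substitutions (base angle $0$, intermediate angle $\theta_1$, final angle $\theta_1+\theta_2$, then the translation identity with $\theta_0=\theta_1$) carry this out correctly.
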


\section{Autonomous problems}

In this section consider a general class of perturbed autonomous problems. By building on the foundations laid down above we provide a method for reducing them to normal form.

\subsection{Perturbed problems}

We now study initial value problems
\begin{equation}
  \label{eq:odegf}
  \frac{d}{dt} x = g(x)+f(x) , \quad x(0)=x_0,
\end{equation}
where  $f,g:\C^D \to \C^D$. In the situations we have in mind, this system is seen as a perturbation of the system $(d/dt)x = g(x)$ whose solutions are known. In what follows we denote by $g_j$, $j=1,\dots,d$, a family of linearly independent
vector fields that commute with each other (i.e.\ $[g_j,g_k] = 0$) and, for each
$u = [u_1,\cdots,u_d]\in\C^d$, we set
\begin{equation}\label{eq:gmu}
g^u = \sum_{j=1}^{d} u_j\, g_j.
\end{equation}

We  always work under the following hypotheses:
\begin{itemize}
\item  $f$ may be decomposed as \begin{equation}
\label{eq:fdec}
f(x) = \sum_{\ell \in A} f_\ell(x)
\end{equation}
for a set of indices $A$, referred to as the alphabet as in the preceding sections.

\item For each $j=1,\dots, d$ and each $\ell \in A$, there is $\nu_{j,\ell} \in \C$ such that
\begin{equation}
  \label{eq:eigen}
  [g_j,f_\ell] = \nu_{j,\ell}\, f_\ell.
\end{equation}

\item There is $v\in \C^d$ such that $g = g^v$.

\item The alphabet $A$ is an additive monoid with neutral element $\zero$,\footnote{Recall that this means that $A$ possesses a binary operation $+$ that is commutative and associative and such that $\zero+\ell =\ell$ for each
    $\ell\in A$. }  such that,
 for each $j=1,\ldots,d$ and $\ell, \ell' \in A$, $\nu_{j, \ell + \ell'} = \nu_{j,\ell} + \nu_{j,\ell'}$.
In particular, $\nu_{j,\zero} = 0$ for all $j$.

\item The vector $v=(v_1,\ldots,v_d) \in \C^d$ is {\em non-resonant}, in the sense that,
given $\ell \in A$ ,  $v_1 \nu_{1,\ell} + \cdots + v_d \nu_{d,\ell}=0$ if and only if $\ell=\zero$.
\end{itemize}

The following proposition, whose proof may be seen in \cite{juanluis}, shows that  \eqref{eq:eigen} may be reformulated in terms of the flows $\varphi_u$ at time $t=1$  of the vector fields $g^u$, $u\in\C^d$. Here and later, we  use the notation
\begin{equation*}
\nu_{\ell}^u = u_1 \nu_{1,\ell} + \cdots + u_d \nu_{d,\ell}. 
\end{equation*}
\begin{proposition}
Equation (\ref{eq:eigen}) is equivalent to the requirement that for each $x \in \R^D$, $u \in \C^d$, $\ell \in A$,
\begin{equation}
\label{eq:eigen2}
\varphi'_{u}(x)^{-1} f_{\ell}(\varphi_{u}(x)) = \exp(\nu_{\ell}^u) f_{\ell}(x).
\end{equation}
\end{proposition}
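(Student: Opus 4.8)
The plan is to prove the equivalence of \eqref{eq:eigen} and \eqref{eq:eigen2} by exhibiting, for fixed $\ell \in A$ and fixed direction $u \in \C^d$, a scalar ordinary differential equation satisfied by the pullback field along the one-parameter flow of $g^u$. Concretely, write $\psi_t$ for the time-$t$ flow of the vector field $g^u$ (so that $\varphi_u = \psi_1$), and define the $\C^D$-valued function
\[
h(t,x) = \psi_t'(x)^{-1}\, f_\ell(\psi_t(x)).
\]
The key computation is to differentiate $h$ with respect to $t$. Using the standard identity that $(d/dt)\,\psi_t'(x)^{-1} = -\psi_t'(x)^{-1}\, (g^u)'(\psi_t(x))$ together with the chain rule applied to $f_\ell(\psi_t(x))$, one obtains
\[
\frac{\partial}{\partial t} h(t,x) = \psi_t'(x)^{-1}\Big( f_\ell'(\psi_t(x))\, g^u(\psi_t(x)) - (g^u)'(\psi_t(x))\, f_\ell(\psi_t(x)) \Big) = \psi_t'(x)^{-1}\, [g^u, f_\ell](\psi_t(x)),
\]
where the sign convention $[f,g] = g'f - f'g$ from the excerpt is used. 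This is the heart of the matter: the bracket appears naturally as the $t$-derivative of the pullback.

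Next I would invoke \eqref{eq:eigen} in the linearized form $[g^u, f_\ell] = \sum_j u_j [g_j, f_\ell] = \big(\sum_j u_j \nu_{j,\ell}\big) f_\ell = \nu_\ell^u\, f_\ell$. Substituting this into the computation above gives
\[
\frac{\partial}{\partial t} h(t,x) = \nu_\ell^u\, \psi_t'(x)^{-1}\, f_\ell(\psi_t(x)) = \nu_\ell^u\, h(t,x),
\]
a linear scalar-coefficient ODE in $t$ for each fixed $x$. Since $\psi_0 = \mathrm{id}$, the initial condition is $h(0,x) = f_\ell(x)$, and hence $h(t,x) = \exp(t\,\nu_\ell^u)\, f_\ell(x)$. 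Evaluating at $t=1$ yields exactly \eqref{eq:eigen2}. For the converse direction, assume \eqref{eq:eigen2} holds for all $u \in \C^d$; replace $u$ by $tu$ (noting $g^{tu} = t\, g^u$ so that the time-$1$ flow of $g^{tu}$ is $\psi_t$, and $\nu_\ell^{tu} = t\,\nu_\ell^u$), differentiate the resulting identity $h(t,x) = \exp(t\,\nu_\ell^u) f_\ell(x)$ with respect to $t$ at $t=0$, and read off from the left-hand computation that $[g^u, f_\ell](x) = \nu_\ell^u\, f_\ell(x)$ for all $u$; taking $u = e_j$ recovers \eqref{eq:eigen}.

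The main obstacle, and the only place that requires care, is the first step: justifying the differentiation-under-the-inverse identity $(d/dt)\,\psi_t'(x)^{-1} = -\,\psi_t'(x)^{-1}\,(g^u)'(\psi_t(x))$ and, more fundamentally, knowing that $\psi_t'(x)$ is invertible along the flow. The latter is automatic because $\psi_t$ is a (local) diffeomorphism for each $t$ in its interval of existence, so $\psi_t'(x) \in GL_D$; the former follows by differentiating $\psi_t'(x)\,\psi_t'(x)^{-1} = I$ after establishing the variational equation $(d/dt)\,\psi_t'(x) = (g^u)'(\psi_t(x))\,\psi_t'(x)$. All of this is classical ODE theory (smooth dependence on initial data), and since the paper works throughout at the formal/smooth level these regularity points are routine; the substantive content is entirely the bracket computation. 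Everything else is bookkeeping with the definition $g^u = \sum_j u_j g_j$ and the linearity of $\ell \mapsto \nu_{j,\ell}$ in its first slot being irrelevant here — only linearity of $u \mapsto \nu_\ell^u$ and of $u \mapsto g^u$ is used.
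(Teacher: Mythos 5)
Your argument is correct: the paper itself defers the proof to \cite{juanluis}, and what you give is precisely the standard argument used there — differentiate the pullback $h(t,x)=\psi_t'(x)^{-1}f_\ell(\psi_t(x))$ along the flow to produce the bracket $[g^u,f_\ell]$ (with the sign matching the paper's convention $[f,g]=g'f-f'g$), reduce to the scalar linear ODE $\partial_t h=\nu_\ell^u h$, and recover the converse by differentiating \eqref{eq:eigen2} with $u$ replaced by $tu$ at $t=0$. The regularity points you flag (invertibility of $\psi_t'(x)$ and the variational equation) are indeed routine at the formal/smooth level at which the paper operates.
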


Before providing examples of systems that satisfy the hypotheses above, we shall obtain a word series representation of the solution of \eqref{eq:odegf}. Use the ansatz $x(t) = \varphi_{t v}(z(t))$
and invoke (\ref{eq:eigen2}), to find that $z(t)$ must be the solution of
$$
\frac{d}{dt} z = \sum_{\ell \in A} \exp(t\nu_{\ell}^v) f_\ell(z), \quad z(0) = x_0.
$$
Since this problem is of the form (\ref{eq:chensystem}) with
\begin{equation}\label{eq:lambdagen}
\lambda_\ell(t) = \exp(t\, \nu_\ell^v),\qquad \ell\in A,
\end{equation}
we find that
$z(t) = W_{\alpha(t;0)}(x_0)$,
 where the coefficients $\alpha(t;0) \in \G$ are given by (\ref{eq:alpha}).   In what follows, we will simply write $\alpha(t)= \alpha(t;0)$.
Thus the solution of (\ref{eq:odegf}) has the representation
\begin{equation}\label{eq:ews1}
 x(t) =\varphi_{t v}(W_{\alpha(t)}(x_0)).
\end{equation}
Note that the coefficients $\alpha_w(t)$ {\em depend on $v$ and the $\nu_{j,\ell}$ but are otherwise independent of $g$ and $f_{\ell}$, $\ell \in A$}.

Systems that satisfy the assumptions include the following (additional examples and further discussion may be seen in \cite{juanluis}):\medskip

{\bf Example 1.}
Consider  systems of the form
\begin{equation}\label{eq:L}
    \frac{d}{dt} x = L x + f(x),
\end{equation}
where $L$ is a diagonalizable $D \times D$  matrix and   each component of  $f(x)$ is a power series in the components of $x$. Let $\mu_1$, \dots, $\mu_d$
  denote the distinct nonzero eigenvalues of $L$, so that $L$ may be uniquely decomposed as
\begin{equation*}
  L = \mu_1 L_1 + \cdots + \mu_d L_d,
\end{equation*}
where the $D\times D$ matrices $L_1,\ldots,L_d$  are projectors ($L_j^2 = L_j$) with $L_jL_k = 0$ if $j\neq k$.
Thus (\ref{eq:gmu}) holds for $g_j(x) = L_j x$, $v_j = \mu_j$. Furthermore (see \cite{juanluis} for details) $f$ may be decomposed as $f = \sum_\bk f_\bk$, where the \lq letters\rq\ $\bk$ are elements of $\Z^d$, $\bk = [k_1,\dots,k_d]$, and, for each $j$ and $\bk$,
$[L_j,f_\bk] = k_jf_\bk$.
Analytic systems of differential equations having an equilibrium at the origin are of the form (\ref{eq:L}), provided that the linearization at the origin is diagonalizable; the perturbation $f$ then contains terms  of degree $>1$ in the components of $x$.

As we shall point out later, Theorem~\ref{th:descomp} addresses the well-known problem, which goes back to Poincar\'e and Birkhoff \cite{arnoldode}, of reducing  \eqref{eq:L} to normal form.
\medskip

{\bf Example 2.}
Consider next real systems of the form
\begin{equation}\label{eq:odeytheta}
\frac{d}{dt} \left[ \begin{matrix}y\\ \theta\end{matrix}\right]
= \left[ \begin{matrix}0\\ \omega\end{matrix}\right]
+f(y,\theta),
\end{equation}
where $y\in\R^{D-d}$, $0<d\leq D$, $\omega\in\R^d$ is a vector of frequencies $\omega_j\neq 0$, $j = 1,\dots,d$, and $\theta$ comprises $d$ angles, so that $f(y,\theta)$ is $2\pi$-periodic  in each component of $\theta$ with Fourier expansion
$$
f(y,\theta) = \sum_{\bk \in\Z^d} \exp(i \bk\cdot \theta)\: \hat f_\bk(y).
$$

After introducing the  functions
\begin{equation*}
f_\bk(y,\theta) = \exp(i\bk\cdot \theta)\: \hat f_\bk(y),\qquad y\in\R^{D-d},\: \theta\in\R^d,
\end{equation*}
the system takes the form (\ref{eq:odegf}) with $x=(y,\theta)$ and
\begin{equation*}
g(y,\theta)
= \left[ \begin{matrix}0\\ \omega\end{matrix}\right].
\end{equation*}
The decomposition \eqref{eq:fdec} holds for the monoid $A=\Z^d$, and, if each $g_j(x)$ is taken to be a constant unit vector, then $g = g^v$, $v_j = \omega_j$ ($j=1,\ldots,d$). In addition, \eqref{eq:eigen} is satisfied with
$
\nu_{j,\bk} = i \, k_j,
$
for each $j=1,\ldots,d$ and each $\bk =(k_1,\ldots,k_d) \in A$. Thus the nonresonance condition above (i.e.,    $v_1 \nu_{1,\ell} + \cdots + v_d \nu_{d,\ell}=0$ if and only if $\ell=\zero$) now becomes
the well-known requirement that  $k_1 \omega_1+\cdots+k_d\omega_d=0$ with integer $k_j$, $j=1,\dots,d$,  only if all $k_j$ vanish

In the particular case where the last
$d$ components of $f$ vanish identically, the differential equations for $\theta$ yield $\theta = \omega t +\theta_0$, and \eqref{eq:odeytheta} becomes a nonautonomous system for $y$ of the form \eqref{eq:odey}. Thus, the format
\eqref{eq:odegf} is a wide generalization of the format  studied in the preceding section.

\subsection{The transport equation. Normal forms}
\label{ss:transport2}

All the results obtained in Section~3.2 can be generalized to the case at hand. We shall omit the proofs of the  results that follow when they may be obtained by adapting the corresponding proofs in Section~3.

We first provide recurrences to find the coefficients required in \eqref{eq:ews1}.

\begin{theorem}
\label{th:gammarec}
Given $\tau \in \R$, $u \in \C^d$, define, for each $w\in\W$, $\gamma_w(\tau,u) \in \C$ by means of the following recursions.
 $\gamma_{\emptyset}(\tau,u)=1$, and for  $r\geq 1$, $\ell_0 \in A\backslash\{\zero\}$,  and $\ell_1,\ldots,\ell_n \in A$,
\begin{equation}
\label{eq:gamma1-5}
\begin{split}
\gamma_{\ell_0}(\tau,u)&=\displaystyle \frac{1}{\nu_{\ell_0}^{v}} (\exp(\nu_{\ell_0}^{u}) -1), \\
\gamma_{\zero^r}(\tau,u) &= \tau^r/{r!}, \\
\gamma_{\zero^r \ell_0}(\tau,u) &= \displaystyle \frac{\gamma_{\zero^r}(\tau,u) \exp(\nu_{\ell_0}^{u}) - \gamma_{\zero^{r-1}\ell_0}(\tau,u)}{\nu_{\ell_0}^{v}},\\
\gamma_{\ell_0 \ell_1\cdots \ell_n}(\tau,u) &= \displaystyle
   \frac{\gamma_{(\ell_0+\ell_1) \ell_2\cdots \ell_n}(\tau,u) - \gamma_{\ell_1\cdots \ell_n}(\tau,u)}{\nu_{\ell_0}^{v}}, \\
\gamma_{\zero^r \ell_0 \ell_1\cdots \ell_n}(\tau,u) &= \displaystyle
    \frac{\gamma_{\zero^r (\ell_0+\ell_1) \ell_2\cdots \ell_n}(\tau,u) - \gamma_{\zero^{r-1} \ell_0 \ell_1\cdots \ell_n}(\tau,u)}{\nu_{\ell_0}^{v}}.
\end{split}
\end{equation}
Then, for each $w\in\W$,
 \begin{equation*}
  \alpha_w(t)=\gamma_w(t,t v).
 \end{equation*}
\end{theorem}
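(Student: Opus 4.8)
The plan is to show that the functions $\gamma_w(\tau,u)$ defined by the recursions \eqref{eq:gamma1-5} reproduce, upon the substitution $(\tau,u)\mapsto(t,tv)$, the iterated integrals $\alpha_w(t)=\alpha_w(t;0)$ associated with the weights \eqref{eq:lambdagen}. This is the exact analogue of Proposition~\ref{prop:alpha1-5} and Theorem~\ref{th:Gamma}, so I would mimic their proof. First I would recall that, since the $\alpha_w(t)$ are the iterated integrals \eqref{eq:alpha} with $\lambda_\ell(t)=\exp(t\,\nu_\ell^v)$, they are characterized, for every word $w=\ell_1\cdots\ell_n$ with $n\geq1$, by the differential recursion
\begin{equation*}
\frac{d}{dt}\alpha_{\ell_1\cdots\ell_n}(t)=\exp(t\,\nu_{\ell_n}^v)\,\alpha_{\ell_1\cdots\ell_{n-1}}(t),\qquad \alpha_{\ell_1\cdots\ell_n}(0)=0,
\end{equation*}
together with $\alpha_\emptyset(t)=1$. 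It therefore suffices to verify that the functions $t\mapsto\gamma_w(t,tv)$ satisfy the same recursion and the same initial conditions.

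The verification proceeds by induction on the length $n$ of $w$. For $n=0$ both sides are $1$. For $n=1$ one has two cases: $w=\zero$ gives $\gamma_\zero(t,tv)=t$ (from the line $\gamma_{\zero^r}$ with $r=1$), matching $\int_0^t ds=t$; and $w=\ell_0$ with $\ell_0\neq\zero$ gives $\gamma_{\ell_0}(t,tv)=(\exp(t\,\nu_{\ell_0}^v)-1)/\nu_{\ell_0}^v$, which is exactly $\int_0^t\exp(s\,\nu_{\ell_0}^v)\,ds$ (here the non-resonance hypothesis guarantees $\nu_{\ell_0}^v\neq0$, so the division is legitimate). For the inductive step, fix $n>1$ and a word $w$; according to its leading letters, $w$ falls into one of the four remaining cases of \eqref{eq:gamma1-5}. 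In each case I would differentiate both sides of the defining identity with respect to $t$ along the curve $u=tv$, using $\frac{d}{dt}\exp(\nu_\ell^u)\big|_{u=tv}=\nu_\ell^v\exp(t\,\nu_\ell^v)$ and $\frac{d}{dt}\gamma_{\zero^r}(t,tv)=\gamma_{\zero^{r-1}}(t,tv)$, and then apply the induction hypothesis to the words of length $<n$ appearing on the right. A short computation then collapses the result to $\exp(t\,\nu_{\ell_n}^v)\,\gamma_{\ell_1\cdots\ell_{n-1}}(t,tv)$; for instance, in the case $w=\ell_0\ell_1\cdots\ell_n$ with $\ell_0\neq\zero$, differentiating $\nu_{\ell_0}^v\gamma_w=\gamma_{(\ell_0+\ell_1)\ell_2\cdots\ell_n}-\gamma_{\ell_1\cdots\ell_n}$ and using $\nu_{\ell_0+\ell_1}^v=\nu_{\ell_0}^v+\nu_{\ell_1}^v$ together with the additivity built into the monoid hypothesis gives the claim. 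The initial condition $\gamma_w(0,0)=0$ for $n\geq1$ with $w\neq\zero^r$ is immediate from the recursions since every right-hand side is then a difference of terms that vanish at $\tau=0$, $u=0$ by induction (and $\gamma_{\zero^r}(0,0)=0$ for $r\geq1$ directly).

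The main point to be careful about — though it is not a deep obstacle — is bookkeeping: one must check that the four recursive cases in \eqref{eq:gamma1-5} genuinely cover every nonempty word exactly once (a word either begins with a nonzero letter, possibly preceded by a block $\zero^r$, and is either a pure power of $\zero$ or not), so that the induction is well-founded and $\gamma_w$ is unambiguously defined. I would also remark, as in the proof of Proposition~\ref{prop:alpha1-5}, that the recursions are nothing but the result of evaluating the innermost integral in \eqref{eq:alpha}, which makes the matching transparent; and that non-resonance is used precisely to ensure $\nu_{\ell_0}^v\neq0$ whenever $\ell_0\neq\zero$, so that all the divisions in \eqref{eq:gamma1-5} make sense. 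Finally, the polynomial-in-$\tau$ structure of each $\gamma_w$ follows by an obvious induction on length from the recursions, paralleling the corresponding statement in Theorem~\ref{th:Gamma}, although it is not needed for the identity $\alpha_w(t)=\gamma_w(t,tv)$ itself.
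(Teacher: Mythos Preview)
Your proposal is correct and follows exactly the route the paper intends: the paper omits the proof of this theorem, stating that it is obtained by adapting the proofs of Proposition~\ref{prop:alpha1-5} and Theorem~\ref{th:Gamma}, and that is precisely what you do---characterize $\alpha_w(t)$ by the differential recursion $\frac{d}{dt}\alpha_{\ell_1\cdots\ell_n}(t)=\exp(t\nu_{\ell_n}^v)\alpha_{\ell_1\cdots\ell_{n-1}}(t)$, $\alpha_{\ell_1\cdots\ell_n}(0)=0$, and verify by induction on word length that $t\mapsto\gamma_w(t,tv)$ satisfies the same recursion. Your remarks on the role of non-resonance and on the recursions arising from the innermost integral also mirror the paper's commentary.
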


The transport problem (cf.\ \eqref{eq:transport}) is:
\begin{equation}\label{eq:transport2}
\frac{\partial}{\partial \tau} \gamma(\tau, u) +
 v \cdot \nabla_u  \gamma(\tau, u)  = \gamma(\tau, u) \star B(u),\quad \gamma(0,0)=\uno.
\end{equation}
where  $B(u) \in\g$ is defined as $B_{\ell}(u) = \exp(\nu_{\ell}^u)$, $\ell \in A$, and $B_w(u)= 0$ if the length of $w\in W$ is not 1.

Lemma~\ref{l:z} needs some adaptation to the present circumstances. We say that a complex-valued function is {\em polynomially smooth} if it is a linear combination of terms of the form $\tau^k \exp(\nu_{\ell}^u)$, $j=1,2,3,\ldots$, $\ell \in A$. For each $w \in \W$, the function $\gamma_w:\R \times \C^d \to \C$ in Theorem~\ref{th:gammarec} is clearly {\em polynomially smooth}.

\begin{lemma}
\label{l:z2}
Let the vector $v \in \C^d$ be nonresonant. If a polynomially smooth function $z: \R \times \C^d \to \C$ satisfies
\begin{equation*}
\frac{\partial}{\partial \tau} z(\tau, u) +
 v \cdot \nabla_u  z(\tau, u) = 0, \quad z(0,0)=0,
\end{equation*}
then  $z(\tau,u)$ is identically zero.
 \end{lemma}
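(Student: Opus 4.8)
The plan is to expand $z$ in the monomial basis suggested by the definition of polynomial smoothness and to show that the constraint forces every coefficient to vanish. Concretely, write
\[
z(\tau,u) = \sum_{\ell\in A}\ \sum_{k\geq 0} c_{k,\ell}\, \tau^k \exp(\nu_\ell^u),
\]
a finite sum, where the $c_{k,\ell}\in\C$ and where, thanks to the additivity hypothesis $\nu_{j,\ell+\ell'}=\nu_{j,\ell}+\nu_{j,\ell'}$, the functions $u\mapsto\exp(\nu_\ell^u)$ are multiplicative in $\ell$ and, crucially, are distinct characters of $\C^d$ for distinct $\ell\in A$ provided $v$ is nonresonant — this last point will need the nonresonance assumption and is part of the argument rather than a triviality.

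First I would compute the action of the transport operator $\partial_\tau + v\cdot\nabla_u$ on a single monomial. Since $\nabla_u \exp(\nu_\ell^u) = (\nu_{1,\ell},\dots,\nu_{d,\ell})\exp(\nu_\ell^u)$, one gets $v\cdot\nabla_u \exp(\nu_\ell^u) = \nu_\ell^v\,\exp(\nu_\ell^u)$, so
\[
\Big(\frac{\partial}{\partial\tau}+v\cdot\nabla_u\Big)\big(\tau^k\exp(\nu_\ell^u)\big)
= k\,\tau^{k-1}\exp(\nu_\ell^u) + \nu_\ell^v\,\tau^k\exp(\nu_\ell^u).
\]
Summing, the PDE $\partial_\tau z + v\cdot\nabla_u z = 0$ becomes, after collecting terms by the character $\exp(\nu_\ell^u)$ and using their linear independence, a family of scalar ODE-like identities: for each fixed $\ell\in A$,
\[
\sum_{k\geq 0}\big((k+1)c_{k+1,\ell} + \nu_\ell^v\,c_{k,\ell}\big)\tau^k = 0,
\]
hence $(k+1)c_{k+1,\ell} = -\nu_\ell^v\,c_{k,\ell}$ for all $k\geq 0$.

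Next I would split into the two cases dictated by nonresonance. If $\ell\neq\zero$, then $\nu_\ell^v\neq 0$; the recursion $(k+1)c_{k+1,\ell}=-\nu_\ell^v c_{k,\ell}$ then shows $c_{k,\ell} = (-\nu_\ell^v)^k c_{0,\ell}/k!$, so the $\ell$-component of $z$ is $c_{0,\ell}\exp(-\tau\nu_\ell^v)\exp(\nu_\ell^u)$, which is a genuine exponential in $\tau$ and hence is polynomially smooth only if $c_{0,\ell}=0$ — here I use the hypothesis that $z$ is polynomially smooth, i.e. polynomial in $\tau$ (up to the finitely many characters), so no honest exponential $\exp(-\tau\nu_\ell^v)$ with $\nu_\ell^v\neq 0$ can occur. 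Therefore $c_{k,\ell}=0$ for all $k$ when $\ell\neq\zero$. If $\ell=\zero$, then $\nu_\zero^v=0$ (since $\nu_{j,\zero}=0$), and the recursion gives $(k+1)c_{k+1,\zero}=0$, i.e. $c_{k,\zero}=0$ for all $k\geq 1$, leaving only the constant term $c_{0,\zero}$. Finally the initial condition $z(0,0)=0$ reads $c_{0,\zero}\exp(\nu_\zero^0)=c_{0,\zero}=0$ (all surviving monomials with $\ell\neq\zero$ already vanish, and $\tau^k$ with $k\geq1$ vanishes at $\tau=0$). Hence $z\equiv 0$.

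The main obstacle is the step asserting that the characters $u\mapsto\exp(\nu_\ell^u)$, $\ell\in A$, appearing in $z$ are linearly independent over $\C$ — this is what lets one decouple the PDE into the per-$\ell$ recursions. Two distinct letters $\ell,\ell'$ could a priori give the same character if $\nu_{j,\ell}=\nu_{j,\ell'}$ for all $j$; one must argue that even then the natural thing to do is to group such terms together (the recursion analysis is unaffected, since $\nu_\ell^v=\nu_{\ell'}^v$ for such a pair), and the only place genuine nonresonance is needed is to separate the behaviour $\nu_\ell^v=0$ (forcing $\ell$ to be $\zero$ so that the term is a pure polynomial in $\tau$) from $\nu_\ell^v\neq 0$ (forcing a true $\tau$-exponential, excluded by polynomial smoothness). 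Everything else is the routine bookkeeping of the monomial recursion and mirrors the proof of Lemma~\ref{l:z}, as the statement already indicates.
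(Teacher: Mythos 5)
Your proof is correct. Note that the paper itself supplies no proof of Lemma~\ref{l:z2}: it is stated without argument, and even the simpler Lemma~\ref{l:z} is explicitly left unproved with a pointer to Lemma~2.4 of the reference on quasi-periodic averaging. So there is no "paper route" to compare against; your argument is the natural one and it holds up. The termwise computation $(\partial_\tau+v\cdot\nabla_u)\bigl(\tau^k e^{\nu_\ell^u}\bigr)=k\tau^{k-1}e^{\nu_\ell^u}+\nu_\ell^v\,\tau^k e^{\nu_\ell^u}$ is right, the decoupling into the recursions $(k+1)c_{k+1,\ell}=-\nu_\ell^v c_{k,\ell}$ follows from linear independence of distinct exponential characters on $\C^d$ together with the grouping of letters that happen to share a character (and you correctly observe that such letters share the same $\nu_\ell^v$, so the grouped recursion is unchanged), and nonresonance enters exactly where it must: it guarantees $\nu_\ell^v\neq 0$ for $\ell\neq\zero$, so a nonzero polynomial in $\tau$ cannot satisfy the recursion (the cleanest phrasing is that the top-degree coefficient would propagate to a nonzero coefficient one degree higher), while for the trivial character only the constant survives and is killed by $z(0,0)=0$. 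The only cosmetic remark is that "the $\ell$-component of $z$ is $c_{0,\ell}\exp(-\tau\nu_\ell^v)\exp(\nu_\ell^u)$" momentarily treats the formal solution of the recursion as if it were an admissible infinite sum; the finite-degree contradiction you then invoke is the actual argument and is sound.
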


Instead of Proposition~\ref{prop:uniquenessGamma} and Theorem~\ref{th:GammainG}, we now have the following result.

\begin{theorem}
\label{th:uniquenessGamma2}
The function $\gamma(\tau, u)$ given in Theorem~\ref{th:gammarec} is the unique solution of problem (\ref{eq:transport2})  such that each $\gamma_w:\R \times \C^d \to \C$, $w\in\W$, is polynomially smooth.
Furthermore, for each $\tau \in\R$, $u \in\C^d$, the element $\gamma(\tau, u)\in \C^{\mathcal{W}}$ belongs to $\G$.
\end{theorem}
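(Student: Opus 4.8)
The plan is to prove Theorem~\ref{th:uniquenessGamma2} in exact parallel with the treatment of the oscillatory case in Section~3.2, using Lemma~\ref{l:z2} as the substitute for Lemma~\ref{l:z}. There are three assertions to establish: (i) the $\gamma(\tau,u)$ of Theorem~\ref{th:gammarec} solves the transport problem \eqref{eq:transport2}; (ii) it is the unique polynomially smooth solution; and (iii) it lies in $\G$ for every $(\tau,u)$.

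First I would verify (i). By construction each $\gamma_w(\tau,u)$ is polynomially smooth, so the only point is that the recursions \eqref{eq:gamma1-5} are precisely the component form of \eqref{eq:transport2}. This is checked by induction on the length of $w$: one applies the operator $\partial_\tau + v\cdot\nabla_u$ to each defining identity in \eqref{eq:gamma1-5} and uses the induction hypothesis, exactly as in the proof of Proposition~\ref{prop:alpha1-5} but with $d/dt$ replaced by $\partial_\tau + v\cdot\nabla_u$ (note $(\partial_\tau + v\cdot\nabla_u)\exp(\nu_\ell^u) = \nu_\ell^v\exp(\nu_\ell^u)$, which is what makes the telescoping work). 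The initial condition $\gamma(0,0)=\uno$ is immediate from $\gamma_\emptyset\equiv 1$, $\gamma_{\zero^r}(0,0)=0$ for $r\ge 1$, and $\gamma_{\ell_0}(0,0)=0$, propagating through the recursions.

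Next, uniqueness (ii): let $\delta = \gamma^{(1)} - \gamma^{(2)}$ be the difference of two polynomially smooth solutions. Then $\delta_\emptyset\equiv 0$, and for a word $w$ with $n>0$ letters, expanding the convolution $\delta\star B(u)$ one sees that $(\partial_\tau + v\cdot\nabla_u)\delta_w(\tau,u)$ is a combination of $\delta_{w'}(\tau,u)$ for words $w'$ shorter than $w$ (multiplied by the polynomially smooth factors $\exp(\nu_\ell^u)$), plus possibly a boundary term; so if all shorter components vanish identically, then $(\partial_\tau + v\cdot\nabla_u)\delta_w \equiv 0$, and since $\delta_w(0,0)=0$ and $\delta_w$ is polynomially smooth (a product/sum of polynomially smooth functions is polynomially smooth), Lemma~\ref{l:z2} forces $\delta_w\equiv 0$. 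Induction on word length closes this. For (iii), the membership $\gamma(\tau,u)\in\G$ is proved by showing the shuffle relations $\gamma_w\gamma_{w'} = \sum_j \gamma_{w_j}$ (for $w\shuffle w' = \sum_j w_j$) hold, by induction on the total number of letters of $w$ and $w'$: applying $\partial_\tau + v\cdot\nabla_u$ to $\sum_j\gamma_{w_j} - \gamma_w\gamma_{w'}$ and using that $B(u)\in\g$ together with the induction hypothesis, one finds this quantity is annihilated by $\partial_\tau + v\cdot\nabla_u$; it vanishes at $(\tau,u)=(0,0)$ and is polynomially smooth, so Lemma~\ref{l:z2} gives the result — this mirrors the proofs of Theorems~\ref{th:GammainG} and is also the argument sketched in Section~6.1.4 of \cite{words}.

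The one genuinely non-routine point is confirming that the class of polynomially smooth functions is closed under the operations that appear — products (for the shuffle step and for the terms $\gamma_{w'}\exp(\nu_\ell^u)$), sums, and the action of $\partial_\tau + v\cdot\nabla_u$ — and that Lemma~\ref{l:z2} indeed applies to each $\delta_w$ produced. Closure under $\partial_\tau$ lowers the power of $\tau$, closure under $v\cdot\nabla_u$ replaces $\exp(\nu_\ell^u)$ by $\nu_\ell^v\exp(\nu_\ell^u)$, and a product $\tau^j\exp(\nu_\ell^u)\cdot\tau^k\exp(\nu_{\ell'}^u) = \tau^{j+k}\exp(\nu_{\ell+\ell'}^u)$ stays in the class because $A$ is a monoid with $\nu_{j,\ell+\ell'} = \nu_{j,\ell}+\nu_{j,\ell'}$ — this is exactly where the monoid hypothesis on the alphabet is used. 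Once this bookkeeping is in place, every step reduces verbatim to the arguments of Section~3.2, and the proof is complete. \qed
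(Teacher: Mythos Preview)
Your proposal is correct and follows exactly the route the paper intends: the paper omits the proof entirely, noting at the start of Section~\ref{ss:transport2} that all results are obtained by adapting the corresponding proofs in Section~3, and Theorem~\ref{th:uniquenessGamma2} is the direct analogue of Proposition~\ref{prop:uniquenessGamma} together with Theorem~\ref{th:GammainG}, with Lemma~\ref{l:z2} replacing Lemma~\ref{l:z}. Your write-up carries out precisely this adaptation, and your explicit remark on closure of the polynomially smooth class under products (via the monoid structure of $A$ and the additivity $\nu_{j,\ell+\ell'}=\nu_{j,\ell}+\nu_{j,\ell'}$) is a genuine point that the paper leaves implicit but which is needed to invoke Lemma~\ref{l:z2} in the shuffle-relation step.
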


Our next aim is to derive a result similar to Theorem~\ref{th:gammaproperties}. We need to introduce, for each $u \in \C^d$,  the algebra map $\Xi_{u}: \C^\W \to \C^\W$  defined as follows: Given $\delta \in \C^\W$, $(\Xi_{u} \delta)_{\emptyset} = \delta_{\emptyset}$, and  for each word $w=\ell_1 \cdots \ell_n$ with $n\geq 1$ letters,
\begin{equation*}
(\Xi_{u} \delta)_{\ell_1 \cdots \ell_n} = \exp(\nu_{\ell_1+\cdots+\ell_n}^u)  \delta_{\ell_1 \cdots \ell_n}.
\end{equation*}
This generalizes the map $\Xi_\theta$ we used in Section 3.

\begin{theorem}\label{th:gammaproperties2}
For arbitrary $\tau_1,\tau_2 \in \R$ and $u_1, u_2 \in\C^d$,
\begin{equation*}
 \gamma(\tau, u) \star  (\Xi_{u} \gamma(\tau', u'))= \gamma(\tau + \tau', u +u').
\end{equation*}
\end{theorem}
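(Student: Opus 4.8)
The plan is to imitate the proof of Theorem~\ref{th:gammaproperties}, replacing the uniqueness statement of Proposition~\ref{prop:uniquenessGamma} by that of Theorem~\ref{th:uniquenessGamma2} and the translation operator on the torus by the automorphism $\Xi_u$. Fix $\tau_1\in\R$ and $u_1\in\C^d$ and consider, as functions of $(\tau,u)\in\R\times\C^d$, the two curves
\[
\gamma^{(1)}(\tau,u) = \gamma(\tau_1,u_1)\star\bigl(\Xi_{u_1}\gamma(\tau-\tau_1,u-u_1)\bigr),
\qquad
\gamma^{(2)}(\tau,u) = \gamma(\tau,u).
\]
I claim both satisfy: (i) for each $w\in\W$, the coefficient is polynomially smooth in $(\tau,u)$; (ii) their value at $(\tau,u)=(\tau_1,u_1)$ equals $\gamma(\tau_1,u_1)$; and (iii) they solve the transport equation \eqref{eq:transport2} (with the initial condition at $(0,0)$ replaced by the value prescribed in (ii)) — more precisely, $\partial_\tau\gamma^{(i)} + v\cdot\nabla_u\gamma^{(i)} = \gamma^{(i)}\star B(u)$ for all $(\tau,u)$. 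Granting all this, the difference $\delta=\gamma^{(1)}-\gamma^{(2)}$ vanishes at $(\tau_1,u_1)$ and, by the argument in the proof of Proposition~\ref{prop:uniquenessGamma} run with the reference point $(\tau_1,u_1)$ in place of $(0,0)$, an induction on word length using Lemma~\ref{l:z2} forces $\delta\equiv0$. Setting $\tau=\tau_1+\tau_2$, $u=u_1+u_2$ then yields the asserted identity.

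The verifications for $\gamma^{(2)}$ are immediate: (i) is the polynomial-smoothness statement in Theorem~\ref{th:uniquenessGamma2}, (iii) is \eqref{eq:transport2}, and (ii) is trivial. For $\gamma^{(1)}$, note first that $\Xi_{u_1}$ is an algebra automorphism of $\C^\W$ mapping $\G$ to $\G$ (the generalized statement preceding Theorem~\ref{th:gammaproperties2}, proved exactly as for $\Xi_\theta$), so $\gamma^{(1)}(\tau,u)\in\G$ and \eqref{eq:act} may be applied to the right factor; property (i) follows because each coefficient of $\gamma(\tau-\tau_1,u-u_1)$ is polynomially smooth, $\Xi_{u_1}$ multiplies it by a fixed exponential $\exp(\nu^{u_1}_{\ell_1+\cdots+\ell_n})$, and $\star$ only forms finite sums of products of such coefficients. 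Property (ii) is clear since the right factor reduces to $\Xi_{u_1}\gamma(0,0)=\Xi_{u_1}\uno=\uno$ and $\uno$ is the unit for $\star$. For (iii), apply the operator $\partial_\tau+v\cdot\nabla_u$ to $\gamma^{(1)}$: the left factor $\gamma(\tau_1,u_1)$ is constant, so differentiation hits only $\Xi_{u_1}\gamma(\tau-\tau_1,u-u_1)$; the key computation is that $\Xi_{u_1}$ intertwines the transport operator with itself,
\[
\bigl(\partial_\tau + v\cdot\nabla_u\bigr)\Xi_{u_1}\gamma(\tau-\tau_1,u-u_1)
= \Xi_{u_1}\Bigl[\bigl(\partial_\tau + v\cdot\nabla_u\bigr)\gamma(\tau-\tau_1,u-u_1)\Bigr]
= \Xi_{u_1}\bigl(\gamma(\tau-\tau_1,u-u_1)\star B(u-u_1)\bigr),
\]
where I used \eqref{eq:transport2} for $\gamma$ and the fact that $\partial_\tau+v\cdot\nabla_u$ commutes with the translation $(\tau,u)\mapsto(\tau-\tau_1,u-u_1)$. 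Then $\Xi_{u_1}(\rho\star B(u-u_1)) = (\Xi_{u_1}\rho)\star(\Xi_{u_1}B(u-u_1))$ because $\Xi_{u_1}$ is an algebra map, and a direct check on length-one words gives $\Xi_{u_1}B(u-u_1)=B(u)$ (both have $\ell$-coefficient $\exp(\nu^u_\ell)$, since $\nu^{u_1}_\ell+\nu^{u-u_1}_\ell=\nu^u_\ell$). Convolving on the left with the constant $\gamma(\tau_1,u_1)$ and using associativity of $\star$ yields $\gamma^{(1)}\star B(u)$, as required.

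The main obstacle is the intertwining identity used in (iii): one must check carefully that $\Xi_{u_1}$ commutes with $v\cdot\nabla_u$ in the right way and that $\Xi_{u_1}B(\,\cdot-u_1)=B(\,\cdot\,)$, which is where the additivity $\nu^u_{\ell+\ell'}=\nu^u_\ell+\nu^u_{\ell'}$ and the linearity of $u\mapsto\nu^u_\ell$ (hence the monoid hypothesis on $A$) enter decisively. Everything else is the now-familiar uniqueness-via-Lemma~\ref{l:z2} machinery, and the two corollaries obtained by specializing $u_1,u_2,\tau_1,\tau_2$ follow exactly as after Theorem~\ref{th:gammaproperties}.
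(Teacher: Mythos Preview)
Your proposal is correct and follows precisely the approach the paper intends: the paper explicitly omits the proof, stating that the results of Section~4.2 are obtained by adapting the corresponding proofs in Section~3, and your argument is exactly the adaptation of the proof of Theorem~\ref{th:gammaproperties} with $\Xi_{u_1}$ playing the role that the built-in parameter $\theta_0$ plays there (compare also the Corollary in Section~3.6). The key verifications you single out---that $\Xi_{u_1}$ is an algebra automorphism commuting with $\partial_\tau+v\cdot\nabla_u$, that $\Xi_{u_1}B(u-u_1)=B(u)$ via linearity of $u\mapsto\nu^u_\ell$, and that polynomial smoothness survives the shift and the convolution so that Lemma~\ref{l:z2} applies at the reference point $(\tau_1,u_1)$---are exactly the places where the adaptation requires care, and you handle them correctly.
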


Let us provide an interpretation of the last result in terms of maps in $\C^D$ (rather than in terms of elements of $\G$). In \cite{juanluis}, it is proved that, for arbitrary $\delta\in\G$ and $u\in\C^d$
\begin{equation}\label{eq:auxjuanluis}
W_{\delta}(\varphi_{u}(x)) = \varphi_{u}(W_{\Xi_u \delta}(x)).
\end{equation}
If we denote,
for each $(\tau,u) \in \R \times \C^d$,
\begin{equation*}
\Phi_{\tau,u}(x) = \varphi_{u}(W_{\gamma(\tau,u)}(x)),
\end{equation*}
then, for arbitrary $\tau,\tau' \in \R$ and $u, u'\in\C^d$,
\begin{align*}
  \Phi_{\tau,u}(\Phi_{\tau',u'}(x)) & = \varphi_u(W_{\gamma(\tau,u)}(\varphi_{u^\prime}(W_{\gamma(\tau',u')}(x)))) \\
   & = \varphi_u(\varphi_{u'}(W_{\Xi_u\gamma(\tau,u)}(W_{\gamma(\tau',u')}(x))))
   \\
   & = \varphi_{u+u'}(W_{\gamma(\tau+\tau',u+u')}(x))
   \\
   & = \Phi_{\tau+\tau',u+u'}(x).
\end{align*}
We have successively used the definition of $\Phi$, equation \eqref{eq:auxjuanluis}, Theorem~\ref{th:gammaproperties2}, equation \eqref{eq:act}, and, again, the definition of $\Phi$. To sum up, we have proved the following result, which generalizes Proposition 5.3 in \cite{part2}.

\begin{theorem}\label{th:Phi}
For arbitrary $\tau,\tau' \in \R$ and $u, u'\in\C^d$,
\begin{equation*}
\Phi_{\tau,u}\circ\Phi_{\tau',u'} = \Phi_{\tau+\tau',u+u'}
\end{equation*}
\end{theorem}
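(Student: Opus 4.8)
The plan is to derive Theorem~\ref{th:Phi} exactly as the excerpt has already set it up: the displayed chain of equalities preceding the theorem statement is, in fact, the whole argument, and what remains is to justify each step carefully and present it as a self-contained proof. First I would fix $\tau,\tau'\in\R$ and $u,u'\in\C^d$ and unwind $\Phi_{\tau,u}(\Phi_{\tau',u'}(x))$ by the definition $\Phi_{\sigma,w}(x)=\varphi_{w}(W_{\gamma(\sigma,w)}(x))$, obtaining $\varphi_u\big(W_{\gamma(\tau,u)}\big(\varphi_{u'}(W_{\gamma(\tau',u')}(x))\big)\big)$. The inner composition $W_{\gamma(\tau,u)}\circ\varphi_{u'}$ is then rewritten using \eqref{eq:auxjuanluis} — which applies because $\gamma(\tau,u)\in\G$ by Theorem~\ref{th:uniquenessGamma2}, so that $W_{\gamma(\tau,u)}(\varphi_{u'}(y))=\varphi_{u'}(W_{\Xi_{u'}\gamma(\tau,u)}(y))$; note one must be mildly careful about which of $u,u'$ plays the role of the shift in \eqref{eq:auxjuanluis}, so I would state that lemma with the shift variable renamed to avoid collision. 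After this step the expression becomes $\varphi_u\big(\varphi_{u'}\big(W_{\Xi_{u'}\gamma(\tau,u)}(W_{\gamma(\tau',u')}(x))\big)\big)$.

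Next I would collapse $\varphi_u\circ\varphi_{u'}=\varphi_{u+u'}$, which is the one-parameter (here $d$-parameter) flow property of the commuting fields $g_j$: since the $g_j$ commute, the time-one flows $\varphi_u=\exp(\sum u_j g_j)$ satisfy $\varphi_u\circ\varphi_{u'}=\varphi_{u+u'}$ for all $u,u'\in\C^d$; this is standard and I would simply cite it (it is implicit in the setup around \eqref{eq:gmu}). In parallel I would collapse the two nested word series using \eqref{eq:act}: since $\gamma(\tau',u')\in\G$, we have $W_{\Xi_{u'}\gamma(\tau,u)}(W_{\gamma(\tau',u')}(x))=W_{\gamma(\tau',u')\star\Xi_{u'}\gamma(\tau,u)}(x)$. (Here I must be careful about the order of the $\star$-product and which variable appears in $\Xi$; I would double-check against Theorem~\ref{th:gammaproperties2}, whose statement reads $\gamma(\tau,u)\star(\Xi_u\gamma(\tau',u'))=\gamma(\tau+\tau',u+u')$, and rename $(\tau,u)\leftrightarrow(\tau',u')$ accordingly, so that the group law yields exactly $\gamma(\tau',u')\star\Xi_{u'}\gamma(\tau,u)=\gamma(\tau'+\tau,u'+u)=\gamma(\tau+\tau',u+u')$.) Combining the two collapses gives $\varphi_{u+u'}\big(W_{\gamma(\tau+\tau',u+u')}(x)\big)$, which by definition of $\Phi$ is $\Phi_{\tau+\tau',u+u'}(x)$. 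Since $x$ was arbitrary, $\Phi_{\tau,u}\circ\Phi_{\tau',u'}=\Phi_{\tau+\tau',u+u'}$.

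There is essentially no hard analytic obstacle here — everything is formal algebra of word series plus two black-box ingredients (equation~\eqref{eq:auxjuanluis} from \cite{juanluis} and Theorem~\ref{th:gammaproperties2}). The one place where care is genuinely needed, and which I would treat as the \emph{main point of the proof}, is the bookkeeping of the shift maps $\Xi_u$ and the noncommutativity of $\star$: one has to pull $\varphi_{u'}$ past the \emph{outer} word series $W_{\gamma(\tau,u)}$ (not the inner one), which introduces $\Xi_{u'}$ acting on $\gamma(\tau,u)$, and then feed the result into \eqref{eq:act} in the correct order so that the result matches the left-hand side of the group law in Theorem~\ref{th:gammaproperties2} after the appropriate relabelling. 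I would therefore write the proof as the four-line display already shown in the excerpt, but with an explicit sentence before it recording the relabelled form of \eqref{eq:auxjuanluis} and of Theorem~\ref{th:gammaproperties2} that is being invoked, so that the reader can check the index juggling at a glance. Everything else — membership of $\gamma(\tau,u)$ in $\G$, polynomial smoothness, nonresonance of $v$ — has already been established upstream and is only used implicitly through the cited results.
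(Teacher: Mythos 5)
Your proposal is correct and is essentially the paper's own argument: the proof given there is precisely the four-line chain of equalities you describe, justified by \eqref{eq:auxjuanluis}, the composition rule \eqref{eq:act}, the flow property $\varphi_u\circ\varphi_{u'}=\varphi_{u+u'}$ of the commuting fields, and the group law of Theorem~\ref{th:gammaproperties2}. Your care with the shift index is well placed — pulling $\varphi_{u'}$ through the outer word series produces $\Xi_{u'}\gamma(\tau,u)$, and the resulting product $\gamma(\tau',u')\star\Xi_{u'}\gamma(\tau,u)$ matches Theorem~\ref{th:gammaproperties2} after the relabelling you indicate.
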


Since, in view of \eqref{eq:ews1}, the solution $x(t)$ may be written as $x(t) = \Phi_{t,tv}(x_0)$, the theorem implies the representations
\begin{equation}\label{eq:representation}
  x(t) = \Phi_{0,tv}(\Phi_{t,0}(x_0)) = \Phi_{t,0}(\Phi_{0,t v}(x_0)).
\end{equation}

The group property $\Phi_{t,0} \circ \Phi_{t',0} = \Phi_{t+t',0}$ implies that $\Phi_{t,0}(x)=W_{\gamma(t,0)}(x)$ is the $t$-flow of the autonomous system
\begin{equation*}
  \frac{d}{dt} X = W_{\bar \beta}(X),
\end{equation*}
where $\bar \beta \in \g$ is given by
\begin{equation}\label{eq:barbeta2}
  \bar \beta = \left. \frac{d}{dt} \gamma(t,0)\right|_{t=0}.
\end{equation}

Similarly,  $\Phi_{0,t u} \circ \Phi_{0,t' u}=\Phi_{0,(t+t')u}$,  implies that, for each fixed $u \in \C^d$,  $\Phi_{0,t u}(x)=\varphi_{t u}(W_{\gamma(0,t u)}(x))$ is the $t$-flow  of an autonomous system
\begin{equation*}
  \frac{d}{dt} x = \tilde g^u(x);
\end{equation*}
differentiation with respect to $t$ of the flow at $t=0$ reveals that
\begin{equation}\label{eq:gtilde}
  \tilde g^u(x) = g^u(x) + W_{\rho(u)}(x),
\end{equation}
where $\rho(u) \in \g$ is given by
\begin{equation*}
  \rho(u) = \left. \frac{d}{dt} \gamma(0,t u)\right|_{t=0}.
\end{equation*}

Since, from Theorem~\ref{th:Phi}, the flows $\Phi_{t,0}$, $\Phi_{0,t u}(x)$, $\Phi_{0,t u'}(x)$, $u, u'\in\C^d$, commute with one another, so do the corresponding vector fields $W_{\bar \beta}(X)$, $\tilde g^u(x)$, $\tilde g^{u'}(x)$. After invoking \eqref{eq:representation}, we summarize our findings as follows.
\begin{theorem}\label{th:descomp}
The system in the initial value problem \eqref{eq:odegf} may be rewritten in the form
\begin{equation*}
  \frac{d}{dt} x = g(x)+f(x) = \tilde g^v(x) + W_{\bar \beta}(x),
\end{equation*}
where $\tilde g^v(x)$ and $\bar\beta$ are respectively given by \eqref{eq:gtilde} and \eqref{eq:barbeta2}. The vector fields $\tilde g^v(x)$ and $W_{\bar \beta}(x)$ commute with each other, with $g(x)+f(x)$ and with $\tilde g^u(x)$ for arbitrary $u\in\C^d$.
\end{theorem}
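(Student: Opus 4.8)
The plan is to assemble the statement from the machinery set up immediately above it: essentially all of the analytic content already lives in Theorem~\ref{th:Phi} and in the discussion of the flows $\Phi_{\tau,u}$, so what remains is to organize it into (i) the decomposition identity and (ii) the bracket relations.

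First I would establish the decomposition. Starting from \eqref{eq:ews1}, which reads $x(t)=\Phi_{t,tv}(x_0)$, I invoke Theorem~\ref{th:Phi} in the form \eqref{eq:representation}, say $x(t)=\Phi_{t,0}\big(\Phi_{0,tv}(x_0)\big)$, and differentiate both sides in $t$ at $t=0$. Since $\Phi_{0,0}$ is the identity map (because $\varphi_0=\mathrm{id}$ and $\gamma(0,0)=\uno$), its Jacobian at $x_0$ is the identity matrix, so the chain/product rule makes the right-hand derivative split cleanly into the sum of the two one-parameter generators, $\frac{d}{dt}\big|_{0}\Phi_{t,0}(x_0)+\frac{d}{dt}\big|_{0}\Phi_{0,tv}(x_0)=W_{\bar\beta}(x_0)+\tilde g^{v}(x_0)$, with $\bar\beta$ and $\tilde g^v$ exactly the objects \eqref{eq:barbeta2} and \eqref{eq:gtilde}. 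On the other hand the left-hand derivative at $t=0$ is $g(x_0)+f(x_0)$ by \eqref{eq:odegf}. As $x_0\in\C^D$ is arbitrary, this gives the identity $g+f=\tilde g^{v}+W_{\bar\beta}$. (Differentiating the other representation in \eqref{eq:representation}, or differentiating at a general $t$ after noting, as in the next step, that $\Phi_{t,0}$ preserves $\tilde g^v$, would work too; taking $t=0$ is the shortest route.)

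Next I would read off the bracket relations. By Theorem~\ref{th:Phi} the assignment $(\tau,u)\mapsto\Phi_{\tau,u}$ is a homomorphism from the additive abelian group $\R\times\C^d$, so the one-parameter subgroups $\{\Phi_{t,0}\}_t$, $\{\Phi_{0,tu}\}_t$ and $\{\Phi_{0,tu'}\}_t$ commute with one another elementwise. Passing from commuting (formal) flows to vanishing Lie--Jacobi brackets of their generators then yields $[\tilde g^{u},\tilde g^{u'}]=0$ and $[\tilde g^{u},W_{\bar\beta}]=0$ for all $u,u'\in\C^d$; in particular $\tilde g^{v}$ and $W_{\bar\beta}$ commute. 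Finally, since $g+f=\tilde g^{v}+W_{\bar\beta}$ by the first step, bilinearity of the bracket shows that $g+f$ commutes with $W_{\bar\beta}$, with $\tilde g^{v}$, and with $\tilde g^{u}$ for every $u$, which is the last assertion.

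The only genuinely delicate point is the ``commuting flows $\Rightarrow$ commuting vector fields'' implication in the purely formal-series setting: one either interprets it through the Hopf-algebra/Lie-algebra correspondence between $\G$ and $\g$ recalled in Section~2.2 (so that $[\,\cdot\,,\cdot\,]=0$ is literally a statement about the $\g$-valued coefficients $\bar\beta$, $\rho(u)$ and the automorphisms $\Xi_u$), or carries out the short direct word-series computation, expanding $\Phi_{\tau,u}\circ\Phi_{\tau',u'}-\Phi_{\tau',u'}\circ\Phi_{\tau,u}$ to second order in $(\tau,\tau')$ and using \eqref{eq:act} together with \eqref{eq:auxjuanluis}. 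Everything else — the splitting of the derivative at $t=0$, the identification of $\bar\beta$ and $\tilde g^v$, the bilinearity argument — is routine, and much of it has in fact already been done in the paragraphs preceding the statement, so in the write-up I would largely point back to those.
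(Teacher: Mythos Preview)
Your proposal is correct and follows essentially the same line as the paper: the paper's ``proof'' is the discussion immediately preceding the theorem, which identifies $\Phi_{t,0}$ and $\Phi_{0,tu}$ as flows of $W_{\bar\beta}$ and $\tilde g^u$, deduces the commutation of the vector fields from Theorem~\ref{th:Phi}, and then invokes \eqref{eq:representation} to obtain the decomposition --- exactly the plan you outline. Your explicit chain-rule differentiation at $t=0$ and the bilinearity remark for the commutation with $g+f$ make the argument slightly more explicit than the paper's terse ``After invoking \eqref{eq:representation}, we summarize our findings'', but there is no substantive difference.
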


Note that the recursions defining $\gamma(t,u)$ in Theorem~\ref{th:gammarec} give rise to similar recursions that allows us to conveniently compute the coefficients $\bar \beta, \rho(u) \in \g$.

In the particular case where $A=\Z^d$ and the eigenvalues $\nu_{j,\ell}$ lie on the imaginary axis, Theorem~\ref{th:descomp} essentially coincides with Theorem~5.5 of~\cite{part2}. The techniques in \cite{part2}
are similar to those used here, but use B-series rather than word series.
The general case of Theorem~\ref{th:descomp} was obtained by means of  {\em extended words series} (see Section 5) in~\cite{juanluis}, where in addition it is shown that the vector fields $\tilde g^u(x)$ are conjugate to $g^u(x)$ by a map of the form $x \mapsto W_{\delta}(x)$, where $\delta \in \G$. The decomposition in Theorem \ref{th:descomp} may be regarded as providing a {\em normal form}, where the original vector field is written as a vector field $\tilde g^v(x)$ that is conjugate to $g^v(x)$  perturbed by a  vector field  $W_{\bar \beta}(x)$ that commutes with  $\tilde g^v(x)$.


{\bf Remark.} For Hamiltonian problems, the commutation results in Theorem~\ref{th:descomp} allows us to write down explitly integrals of motion of the given problem. Details may be seen in \cite{part2} and \cite{juanluis}.

\section{Further extensions}
In this section we study generalizations of the perturbed system in \eqref{eq:odegf}. Extended word series, introduced in \cite{words}, are a convenient auxiliary tool to study those generalizations.

\subsection{Extended word series}
Just as the study of systems of the form \eqref{eq:chensystem} leads to the introduction of word series via the representation \eqref{eq:ws1}, the expression \eqref{eq:ews1} suggests the introduction of {\em extended word series}. Given the commuting vector fields $g_j$, $j = 1,\dots,d$ and the vector fields $f_\ell$, $\ell\in A$
in the preceding section, to each $(v,\delta) \in \C^d\times \C^\W$ we associate its {\em extended word series \cite{words}, \cite{juanluis}:}
\[
\overline{W}_{(v,\delta)}(x) =  \varphi_{v}(W_{\delta}(x)).
\]
With this terminology, the solution of (\ref{eq:odegf}) in \eqref{eq:ews1}  may be  written as $x(t) = \overline W_{(tv,\alpha(t))}(x_0)$.

The symbol $\overline{\G}$ denotes the set  $\C^d \times \G$.
Thus, for each $t$, the
solution coefficients $(tv,\alpha(t))$ provide an example of element of $\overline{\G}$.
For $(u,\gamma)\in\overline{\G}$ and $(v,\delta)\in\C^d\times \C^\W$ we set
\begin{equation*}
(u,\gamma) \bigstar(v,\delta) = (
v+\delta_\emptyset u,
\gamma \star \Xi_{u} \delta)\in \C^d\times \C^\W.
\end{equation*}
For this operation $\overline{\G}$ is a noncommutative group, with unit $ \overline{\uno} = (0,\uno)$;
$(\C^d,\uno)$ and $(0,\G)$ are subgroups of $\overline{\G}$. (In fact $\overline{\G}$ is an outer semidirect product of $\G$ and the additive group $\C^d$, as discussed in Section 3.2 of \cite{words}.)

By using (\ref{eq:act}) and (\ref{eq:auxjuanluis}), it is a simple exercise to check that the product
$\bigstar$ has the following implication for the composition of the corresponding extended word series
\begin{equation*}
\overline{W}_{(v,\delta)}\big(\overline{W}_{(u,\gamma)}(x)\big) =
\overline{W}_{(u,\gamma)\bigstar (v,\delta)}(x), \qquad \gamma, \in{\G},\:\: \delta\in\C^\W,\:\: u,               v\in\C^d.
\end{equation*}

\subsection{More general perturbed problems}

We now generalize the problem (\ref{eq:odegf}), and allow a more general perturbation:
\begin{equation*}
  \frac{d}{dt} x = g(x)+W_{\beta}(x), \quad x(0)=x_0,
\end{equation*}
where $\beta \in \g$.
Clearly, the original problem (\ref{eq:odegf}) corresponds to the particular case where $\beta_{\ell} =1$ for each $\ell \in A$, and $\beta_{w}=0$ if the length of the word $w$ is not 1. Other choices of $\beta$ are of interest \cite{words} when analyzing numerical integrators by means of the method of modified equations \cite{ssc}.

Proceding as in the derivation of \eqref{eq:ews1},  we find that
the flow of (\ref{eq:odegf}) is  given by
\[x(t) = \overline{W}_{(tv,\alpha(t))}(x(0)), \]
where $\alpha(t) \in \G$ is the solution of
\begin{equation*}
    \frac{d}{dt} \alpha(t) = \alpha(t) \star \Xi_{t v} \beta, \quad \alpha(0)=\uno.
\end{equation*}
Moreover, $\alpha(t) = \gamma(t,t v)$, where $\gamma(\tau,u)$ is the unique polynomially smooth solution of
the transport problem
  \begin{equation*}
    \frac{\partial}{\partial \tau} \gamma(\tau,u) + v \cdot \nabla \gamma(\tau,u) = \gamma(\tau,u) \star \Xi_{u} \beta, \quad \gamma(0,0)=\uno,
  \end{equation*}
which clearly generalizes \eqref{eq:transport2}.
For each $(\tau,u) \in \R \times \C^d$, the element $\gamma(\tau,u)$ belongs to the group $\G$. Note that the recursions (\ref{eq:gamma1-5}) are not valid for general $\beta$.

In analogy with Theorem~\ref{th:gammaproperties2}, we have that, for arbitrary $\tau,\tau' \in \R$ and $u,u'\in C^d$,
\begin{equation*}
  (u,\gamma(\tau,u))  \bigstar   (\tau',\gamma(\tau',u')) =   (u+u',\gamma(\tau+\tau',u+u')).
\end{equation*}
Theorems~\ref{th:Phi} and \ref{th:descomp} hold true for general $\beta\in\g$.

\begin{acknowledgement} A. Murua and J.M.
Sanz-Serna have been supported by proj\-ects MTM2013-46553-C3-2-P and MTM2013-46553-C3-1-P from Ministerio de Eco\-nom\'{\i}a y Comercio, Spain. Additionally A. Murua has been partially supported by the Basque Government  (Consolidated Research Group IT649-13).
\end{acknowledgement}

%
%
%

\end{document}